\newtheorem*{remark}{Remark}
\newtheorem{theorem}{Theorem}[section]
\newaliascnt{lemma}{theorem}
\newtheorem{lemma}[lemma]{Lemma}
\newaliascnt{proposition}{theorem}
\newaliascnt{corollary}{theorem}
\newtheorem{corollary}[corollary]{Corollary}
\newaliascnt{conjecture}{theorem}
\newaliascnt{example}{theorem}
\newtheorem{example}[example]{Example}
\def\tagform@#1{\maketag@@@{\ignorespaces#1\unskip\@@italiccorr}}
\let\orgtheequation\theequation
\def\theequation{(\orgtheequation)}
\def\equationautorefname~{}
\newcommand{\D}{{\mathbb D}}
\newcommand{\e}{\varepsilon}
\newcommand{\Imag}{\operatorname{Im}}
\newcommand{\N}{{\mathbb N}}
\newcommand{\Ray}{\operatorname{Ray}}
\newcommand{\R}{{\mathbb R}}
\newcommand{\Real}{\operatorname{Re}}
\newcommand{\Sp}{{\mathbb S}}
\begin{document}

\title[Steklov eigenvalues and quasiconformal maps]{Steklov eigenvalues and quasiconformal maps of simply connected planar domains}
\author[]{A. Girouard, R. S. Laugesen and B. A. Siudeja}
\address{Department de Mathematiques et Statistique, Univ.\ Laval, Quebec, Qc, Canada}
\email{Alexandre.Girouard@ulaval.ca}
\address{Department of Mathematics, Univ.\ of Illinois, Urbana,
IL 61801, U.S.A.}
\email{Laugesen\@@illinois.edu}
\address{Department of Mathematics, Univ.\ of Oregon, Eugene,
OR 97403, U.S.A.}
\email{Siudeja\@@uoregon.edu}
\date{\today}

\keywords{Isoperimetric, spectral zeta, heat trace, partition function.}
\subjclass[2010]{\text{Primary 35P15. Secondary 35J20,30C62}}

\begin{abstract}
We investigate isoperimetric upper bounds for sums of consecutive Steklov eigenvalues of planar domains. The normalization involves the perimeter and scale-invariant geometric factors which measure deviation of the domain from roundness. We prove sharp upper bounds for both starlike and simply connected domains, for a large collection of spectral functionals including partial sums of the zeta function and heat trace. The proofs rely on a special class of quasiconformal mappings.
\end{abstract}

\maketitle

\section{\bf Introduction and Results}
\label{sec:intro}

Steklov eigenvalues of planar domains describe the frequencies of vibration of a membrane with mass concentrated at the boundary. Mathematically, we let $\Omega\subset\R^2$ be a bounded planar domain with Lipschitz boundary $\Sigma=\partial \Omega$. The Steklov eigenvalue problem is to determine the real numbers $\sigma$ for which a nonzero harmonic function exists having normal derivative equal to $\sigma$ times the value on the boundary:
\begin{equation*}
\begin{cases}
  \Delta u=0& \text{ in } \Omega,\\
  \frac{\partial u}{\partial n}=\sigma q u& \text{ on
  }\Sigma,
\end{cases}
\end{equation*}
where $q\in L^\infty(\Sigma)$ is a positive weight function. The spectrum is discrete \cite{A04} and is given by a sequence of eigenvalues 
\[
0=\sigma_0 < \sigma_1 \le \sigma_2 \le \dots \nearrow \infty 
\]
that grows asymptotically like $\sigma_j \sim j \pi/\int_\Sigma q \, ds$ if $\Sigma$ and $q$ are smooth. The corresponding eigenfunctions form an orthonormal basis of $L^2(\Sigma)$. For these basic properties of the Steklov spectrum, see the recent survey \cite{GP14} and the references therein. When we want to emphasize the dependence of the eigenvalue on the domain and the weight, we will write $\sigma_j(\Omega,q)$. In the unweighted case ($q \equiv 1$), we write simply $\sigma_j(\Omega)$. 

The Steklov spectrum can rarely be computed explicitly. Even for the square the spectrum was completely determined only recently \cite{GP14}. This lack of examples makes it especially interesting to obtain good estimates on Steklov eigenvalues, as we will do in this paper. We study sums of consecutive Steklov eigenvalues, asking:
\begin{center}
  \emph{how large can the sum $\sigma_1+\cdots+\sigma_j$ be?}
\end{center}
Eigenvalue sum inequalities generate zeta function and heat trace inequalities via majorization --- see \autoref{Corollary:Special} below and its proof. In general, the sum of the first $j$ eigenvalues represents the energy for filling the lowest $j$ quantum states when at most one particle can occupy each state (the Pauli exclusion principle). Motivated by this physical interpretation, and in an attempt to prove a summed version of the P\'{o}lya conjecture, the eigenvalue sums of the Laplacian have been studied extensively through Berezin--Li--Yau inequalities \cite{HH08,LY83}, giving results that are asymptotically sharp as $j \to \infty$. In a different direction, geometrically sharp inequalities for Laplace eigenvalue sums (with fixed index $j$) were developed on starlike domains by the second and third authors \cite{LS13, LS14}. The biLaplacian was treated too \cite{S14}. 

We discover significantly stronger results for the Steklov case. Indeed, we will handle not just starlike domains but the more general class of simply connected domains. The key new idea in the paper is the introduction of quasiconformal mappings to obtain sharp eigenvalue estimates. Specifically, we transplant trial functions from the disk to a simply connected domain through a quasiconformal mapping whose complex dilatation depends only on the angular variable. In the past, conformal mappings were used for this purpose: by P\'olya--Schiffer \cite{PSc53}, P\'olya--Szeg\"o \cite{PS51} and Laugesen--Morpurgo \cite{LM98} for the Laplacian; and Dittmar \cite{D88,D04,D07}, Hersch--Payne \cite{HP68}, Weinstock \cite{W54} for the Steklov problem. Quasiconformal maps give considerably more flexibility. 

Further, the ``angular uniformization'' step in our method enables us to work with sums of eigenvalues rather than sums of reciprocals as earlier authors did; this improvement yields heat trace inequalities and more; see \autoref{Corollary:SimplyConnectedEstimate}. And we obtain smaller (hence better) constants in the Steklov situation than the original Laplacian case would predict, due to our use of an optimal stretch of the disk: the map $r \mapsto r^t$ in \autoref{sec:underlying}. Consequently one reduces from an arithmetic mean of two constants to a geometric mean, for example from $(1+\gamma^2)/2$ to $\gamma$ in \autoref{Corollary:SimplyConnectedEstimate} below.

Historically, Steklov introduced the eigenvalue problem in 1902~\cite{St}. It can be interpreted also in terms of sloshing of a liquid \cite{Ketal,P67}. In the unweighted case, the Steklov spectrum coincides with that of the Dirichlet-to-Neumann operator 
$f \mapsto \partial_n(\mathcal H f)$, where $\mathcal Hf$ is the unique harmonic extension of $f$ from $\partial \Omega$ into the interior of $\Omega$. This Dirichlet-to-Neumann operator arises in numerous inverse problems \cite{SU90}. 

\subsection*{Spectrum of the disk}
The unweighted Steklov spectrum ($q \equiv 1$) of the unit disk $\D$ is well known to be $0,1,1,2,2,3,3,\ldots$. That is, 
\begin{equation} \label{eq:diskspectrum}
\sigma_j(\D) = \left\lceil \frac{j}{2} \right\rceil , \qquad j \geq 0 .
\end{equation}
Each positive eigenvalue $\sigma=k$ has multiplicity $2$, with eigenfunctions 
\begin{equation} \label{eq:diskeigenfns}
u = r^k\cos(k\theta), \qquad u = r^k\sin(k\theta),
\end{equation}
that are harmonic on the disk and satisfy $\frac{\partial u}{\partial r}=ku$ on the unit circle. 

\subsection*{Quasiconformal mappings of the disk, and the main result}

Recall the Wirtinger derivatives
\[
\partial f = \frac{1}{2} (f_x - i f_y) , \qquad \overline{\partial} f = \frac{1}{2} (f_x + i f_y) .
\]
A homeomorphism $f$ of the unit disk $\D$ onto a planar domain $\Omega$ is \emph{quasiconformal} if $f$ is absolutely continuous on lines and 
\[
\overline{\partial} f = \mu \, \partial f \qquad \text{a.e.\ in $\D$}
\]
for some $\mu \in L^\infty(\D)$ with $\lVert \mu \rVert_{L^\infty(\D)} < 1$. Recall that $\overline{\partial} f = \mu \, \partial f$ is known as the \emph{Beltrami equation}, and $\mu$ is called the \emph{complex dilatation}. For more information on quasiconformal mappings, see the book of Lehto and Virtanen \cite[Chapter IV]{LV73}. 

A simplifying assumption in this paper is that: 
\[
\text{the complex dilatation $\mu$ depends only on the angular variable $\theta$.}
\]
This assumption fails in general, but it does hold for conformal mappings, where $\mu \equiv 0$, and for certain starlike mappings (see \autoref{ex:starlike}). Under this angular assumption we define 
\begin{equation} \label{eq:a0a1are}
a_0(\theta) = \frac{|e^{2i\theta}-\mu(e^{i\theta})|^2}{1-|\mu(e^{i\theta})|^2} , \qquad
a_1(\theta) = \frac{|e^{2i\theta}+\mu(e^{i\theta})|^2}{1-|\mu(e^{i\theta})|^2} .
\end{equation}
Then let
\begin{equation} 
g_0 = \frac{1}{2\pi} \int_0^{2\pi} a_0(\theta) \, d\theta , \qquad 
g_1 = \frac{\frac{1}{2\pi}\int_0^{2\pi} a_1(\theta) p(\theta)^2 \,d\theta}
  {\big(\frac{1}{2\pi}\int_0^{2\pi} p(\theta) \,d\theta\big)^{\! 2}} , \label{eq:g01is}
\end{equation}
where the weight function
\[
p(\theta)=q(f(e^{i\theta})) |\partial_\theta f(e^{i \theta})|
\]
on the unit circle has been defined by requiring it to push forward under $f$ to the weight $q$ on $\Sigma$. (We assume $f : \partial \D \to \partial \Omega$ is absolutely continuous, so that the last formula makes sense a.e.) 
Assuming $p \in L^2[0,2\pi]$, we have $g_1 < \infty$. Clearly $p$ has total mass 
\[
\int_0^{2\pi} p \, d\theta = \int_\Sigma q \, ds = L(\Sigma,q),
\]
which is the $q$-weighted length of the boundary $\Sigma$. 
\begin{lemma} \label{le:ggeq1}
Under the assumptions above, one has $g_0 g_1 \geq 1$. 

Equality statement: assuming the Beltrami equation holds also on the unit circle, one has that $g_0 g_1 = 1$ if and only if $e^{-2i\theta} \mu \in (-1,1)$ and $|\partial_r f| (q \circ f) = \text{constant}$ almost everywhere on the unit circle. 
\end{lemma}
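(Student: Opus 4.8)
The plan is to reduce the whole statement to a single pointwise inequality combined with one application of Cauchy--Schwarz. First I would record the pointwise bound $a_0(\theta)\,a_1(\theta)\ge 1$. Writing $\mu=\mu(e^{i\theta})$, this follows from the factorization
\[
a_0 a_1 = \frac{|e^{2i\theta}-\mu|^2\,|e^{2i\theta}+\mu|^2}{(1-|\mu|^2)^2} = \frac{|e^{4i\theta}-\mu^2|^2}{(1-|\mu|^2)^2},
\]
together with the reverse triangle inequality $|e^{4i\theta}-\mu^2|\ge 1-|\mu|^2>0$, valid since $|\mu^2|=|\mu|^2<1$. Tracking the equality case of the reverse triangle inequality ($\mu^2$ a nonnegative real multiple of $e^{4i\theta}$, equivalently $(e^{-2i\theta}\mu)^2=|\mu|^2\ge 0$) shows that $a_0a_1=1$ holds exactly when $e^{-2i\theta}\mu\in(-1,1)$. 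This supplies one half of the equality statement.

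Next I would apply Cauchy--Schwarz to the pair of functions $\sqrt{a_0}$ and $\sqrt{a_1}\,p$ on $[0,2\pi]$:
\[
\Big(\int_0^{2\pi}\sqrt{a_0 a_1}\,p\,d\theta\Big)^{\!2} \le \Big(\int_0^{2\pi} a_0\,d\theta\Big)\Big(\int_0^{2\pi} a_1 p^2\,d\theta\Big).
\]
Because $p\ge 0$ and $\sqrt{a_0 a_1}\ge 1$ by the first step, the left-hand side dominates $\big(\int_0^{2\pi} p\,d\theta\big)^2$. Dividing through by $(2\pi)^2$ and rearranging according to the definitions of $g_0$ and $g_1$ then yields $g_0 g_1\ge 1$; the normalization $\int_0^{2\pi} p\,d\theta = L(\Sigma,q)$ is precisely what makes the two sides line up.

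For the equality statement I would trace both inequalities backwards. Equality forces $\sqrt{a_0 a_1}=1$ wherever $p>0$, that is $e^{-2i\theta}\mu\in(-1,1)$ almost everywhere, and simultaneously forces the Cauchy--Schwarz proportionality $\sqrt{a_1}\,p=\lambda\sqrt{a_0}$. To read the proportionality off geometrically I would pass to polar coordinates: expressing $\partial_r f$ and $\partial_\theta f$ through the Wirtinger derivatives and inserting the Beltrami equation $\overline{\partial} f=\mu\,\partial f$ on the circle gives $|\partial_\theta f| = |\partial f|\,|e^{2i\theta}-\mu|$ and $|\partial_r f| = |\partial f|\,|e^{2i\theta}+\mu|$, hence $|\partial_\theta f|^2 = J_f\,a_0$ and $|\partial_r f|^2 = J_f\,a_1$ with Jacobian $J_f=|\partial f|^2-|\overline{\partial} f|^2=|\partial f|^2(1-|\mu|^2)$. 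When $a_0 a_1=1$ these combine to give $|\partial_\theta f|=a_0\,|\partial_r f|$, so that $p=(q\circ f)|\partial_\theta f| = a_0\,(q\circ f)|\partial_r f|$; since here $\sqrt{a_0/a_1}=a_0$, the proportionality $p=\lambda a_0$ says exactly that $(q\circ f)\,|\partial_r f|$ is constant almost everywhere.

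The inequalities themselves are routine; the step demanding genuine care is the last one, where the Beltrami equation must be invoked \emph{on the unit circle} rather than merely in the interior in order to obtain the identities $|\partial_\theta f|^2=J_f a_0$ and $|\partial_r f|^2=J_f a_1$, and where one must verify that $p>0$ almost everywhere so that equality in the first inequality genuinely forces the pointwise condition $a_0 a_1=1$. This is exactly why the equality statement is posed under the hypothesis that the Beltrami equation holds on the boundary circle.
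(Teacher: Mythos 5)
Your proof is correct and follows essentially the same route as the paper's: the pointwise factorization $a_0a_1=|e^{4i\theta}-\mu^2|^2/(1-|\mu|^2)^2\ge 1$ via the triangle inequality, Cauchy--Schwarz applied to $\sqrt{a_0}$ and $\sqrt{a_1}\,p$, and the polar/Wirtinger identities for $\partial_r f$ and $\partial_\theta f$ on the circle to convert the Cauchy--Schwarz proportionality into the condition $(q\circ f)|\partial_r f|=\text{const}$. The only (minor) point the paper makes that you leave implicit is that the argument reverses, so the two necessary conditions are also sufficient for $g_0g_1=1$.
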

The lemma is proved in \autoref{sec:underlying}.

For example, if $f$ is conformal on the closed disk then $\mu \equiv 0$ and $|\partial_r f| = |f^\prime|=|\partial_\theta f|$ on the unit circle, so that the equality condition reduces to saying that $q$ is the conformal pushforward of a constant weight. 

Denote the geometric mean of the quantities $g_0$ and $g_1$ by
\begin{equation} \label{eq:g-underlying}
g = \sqrt{g_0 g_1} \geq 1,
\end{equation}
where $g \geq 1$ by \autoref{le:ggeq1}. Notice $g$ depends on both the mapping $f$ and weight $q$. Write $\R_+=(0,\infty)$ for the positive half-axis. Now we come to the main result, proved in \autoref{sec:underlying}. 
\begin{theorem}[Estimating the Steklov eigenvalues] \label{th:underlying}
Assume $f : \D \rightarrow \Omega$ is a quasiconformal mapping from the disk to a bounded  planar domain, and that $f$ extends to a homeomorphism of the closures with $f : \partial \D \to \partial \Omega$ being absolutely continuous. Suppose the complex dilatation $\mu$ depends only on the angular variable $\theta$, that $q \in L^\infty(\Sigma)$ is a positive weight function on $\Sigma$, and that $p \in L^2[0,2\pi]$. 

Then for each $n \in \N$ and every concave increasing function $C : \R_+ \rightarrow \R$,
\[
  \sum_{j=1}^n C \big( \sigma_j(\Omega,q)L(\Sigma,q) \big) \leq
  \sum_{j=1}^n C \! \left( 2\pi g \left\lceil \frac{j}{2} \right\rceil \right)
\]
with equality if $\Omega$ is a disk, $q \equiv \text{const.}$\ and $f$ is a complex linear map of $\D$ onto $\Omega$.

Equality statement for the first nonzero eigenvalue: if $\sigma_1(\Omega,q)L(\Sigma,q) = 2\pi g$ then $(\Omega,q)$ is conformally equivalent to $(\D,p_c)$ for some constant weight function $p_c$, and equality holds in \autoref{le:ggeq1}. If also $q \equiv1$, then $\Omega$ is a disk.
\end{theorem}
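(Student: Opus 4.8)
The plan is to prove the inequality first for the linear functional $C(x)=x$, in the sharp partial-sum form
\[
\sum_{j=1}^{n}\sigma_j(\Omega,q)\,L(\Sigma,q)\le \sum_{j=1}^{n}2\pi g\left\lceil\tfrac{j}{2}\right\rceil ,
\]
and then to deduce the general concave statement by majorization. For the partial sums I would use the variational principle: $\sum_{j=1}^{N}\sigma_j(\Omega,q)$ equals the minimum of $\sum_{i=1}^{N}\int_\Omega|\nabla v_i|^2\,dA$ over families $v_1,\dots,v_N\in H^1(\Omega)$ that are orthonormal in the weighted inner product $\langle u,v\rangle=\int_\Sigma q\,uv\,ds$ and orthogonal there to the constants. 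Any admissible family is therefore an upper bound, and the whole game is to build a good one by transplanting the disk eigenfunctions \autoref{eq:diskeigenfns} through $f$.

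Two devices make the transplanted functions both admissible and efficient. The first is \emph{angular uniformization}: introduce the new angular variable $\tau$ by $d\tau=\frac{2\pi}{L(\Sigma,q)}p(\theta)\,d\theta$. Since $p\,d\theta$ pushes forward to $q\,ds$, the functions $\cos(k\tau)$ and $\sin(k\tau)$ are automatically mutually orthogonal and orthogonal to constants in $\langle\cdot,\cdot\rangle$, each with norm squared $L(\Sigma,q)/2$, by plain Fourier orthogonality in $\tau$. This removes all orthogonality bookkeeping at a stroke. The second is the \emph{optimal radial stretch}: for each level $k\ge1$ I would take the trial pair
\[
v_{k,c}=\bigl(r^{t_k}\cos(k\tau)\bigr)\circ f^{-1},\qquad v_{k,s}=\bigl(r^{t_k}\sin(k\tau)\bigr)\circ f^{-1},
\]
leaving the exponent $t_k>0$ free for later optimization.

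The computational core is the transformation of the Dirichlet integral under $f$. From the Beltrami equation one gets, on the unit circle, $|\partial_\theta f|=|\partial f|\,|e^{2i\theta}-\mu|$ and $|\partial_r f|=|\partial f|\,|e^{2i\theta}+\mu|$, so the quantities in \autoref{eq:a0a1are} are precisely $a_0=|\partial_\theta f|^2/J_f$ and $a_1=|\partial_r f|^2/J_f$, with $J_f$ the Jacobian. Pulling $\int_\Omega|\nabla v|^2\,dA$ back to the disk, the radial part factors off as $\int_0^1 r^{2t_k-1}\,dr=1/(2t_k)$, while the angular part expands into $a_0$- and $a_1$-weighted integrals plus an odd cross term proportional to $\Imag(\mu e^{-2i\theta})$. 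The key algebraic point is that \emph{adding} the cosine and sine energies cancels the cross terms and collapses $\cos^2(k\tau)+\sin^2(k\tau)\equiv1$, leaving the clean identity
\[
\int_\Omega|\nabla v_{k,c}|^2\,dA+\int_\Omega|\nabla v_{k,s}|^2\,dA=\pi\Bigl(t_k\,g_0+\frac{k^2}{t_k}\,g_1\Bigr),
\]
with $g_0,g_1$ as in \autoref{eq:g01is}. Choosing $t_k=k\sqrt{g_1/g_0}$ converts this arithmetic mean into a geometric one by AM--GM, giving the value $2\pi k\sqrt{g_0g_1}=2\pi kg$; this is exactly the arithmetic-to-geometric improvement advertised in the introduction. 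Dividing by the boundary norm $L/2$ yields $(\sigma_{2k-1}+\sigma_{2k})L\le4\pi kg$, and summing over $k$ gives the even partial sums; the odd ones follow since $\sigma_{2k-1}\le\sigma_{2k}$ forces $\sigma_{2k-1}L\le2\pi kg$.

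It remains to pass to general concave increasing $C$ and to settle equality. Both sequences $\bigl(\sigma_j(\Omega,q)L\bigr)_j$ and $\bigl(2\pi g\lceil j/2\rceil\bigr)_j$ are nondecreasing and the partial sums of the former are dominated by those of the latter, so a standard majorization (summation-by-parts) argument gives $\sum_{j=1}^n C(\sigma_jL)\le\sum_{j=1}^n C(2\pi g\lceil j/2\rceil)$. Equality when $\Omega$ is a disk, $q$ constant and $f$ complex linear is a direct check ($\mu\equiv0$ and $p$ constant force $g_0=g_1=1$). For the first-eigenvalue rigidity I would run the case $n=2$ and make every inequality sharp: saturating AM--GM fixes the stretch, saturating the variational bound forces the trial pair to be genuine eigenfunctions, and the equality clause of \autoref{le:ggeq1} then pins down $\mu$ and the weight, yielding conformal equivalence to $(\D,p_c)$ and, when $q\equiv1$, a disk. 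The step I expect to be hardest is the energy computation above: tracking the $\theta$-dependence of $\mu$ while separating the radial and angular integrals, and verifying that the cross terms cancel exactly so that the decisive geometric mean emerges — the subtlety being that $a_0,a_1$ are boundary quantities whereas the Dirichlet energy is interior, reconciled only by the angular-only dependence of the complex dilatation.
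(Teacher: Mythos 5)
Your core computation is sound and essentially reproduces the paper's argument: the same transplanted trial functions (your per-level stretch $r^{t_k}$ with $t_k=k\sqrt{g_1/g_0}$ is exactly the paper's $u(r^t,\cdot)$ with $u=r^k\cos k\theta$ and the single stretch $t=\sqrt{g_1/g_0}$), the same transformation lemma for the Dirichlet integral, the same identification of $g_0,g_1$, the same AM--GM step at the optimal stretch, and majorization at the end. The one genuine methodological difference is how you separate the $\theta$-integral and kill the cross term $a_2\,h_r h_\theta/r$: the paper averages the Rayleigh quotient over rotations $\phi$ and a reflection $\pm$ of the angular variable, whereas you add the cosine and sine members of each pair. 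Both devices work (the pointwise cancellation $\cos\cdot(-\sin)+\sin\cdot\cos=0$ and the collapse $\cos^2+\sin^2=1$ are correct), but the paper's averaging has a payoff you lose: it shows that each \emph{individual} trial function has averaged Rayleigh quotient exactly $\tfrac{2\pi g}{L}\lceil j/2\rceil$, so the partial-sum bound drops out for every $n$ at once, odd or even.

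Two gaps. First, odd $n$: your step ``dividing by the boundary norm yields $(\sigma_{2k-1}+\sigma_{2k})L\le 4\pi kg$'' is not available --- the Rayleigh--Poincar\'{e} principle bounds the full partial sum $\sum_{j=1}^{2K}\sigma_j$ by the total energy of all $K$ pairs; it does not bound an isolated pair of eigenvalues by the energy of one pair of trial functions (for that the pair would have to be orthogonal to the first $2k-2$ eigenfunctions, which it is not). The odd case is still recoverable from your even bounds: since $\sigma_{2K-1}\le\sigma_{2K}$, one has $\sum_{j=1}^{2K-1}\sigma_j\le\tfrac12\bigl(\sum_{j=1}^{2K-2}\sigma_j+\sum_{j=1}^{2K}\sigma_j\bigr)$, and inserting the two even bounds gives exactly $\tfrac{2\pi g}{L}K^2=\tfrac{2\pi g}{L}\sum_{j=1}^{2K-1}\lceil j/2\rceil$. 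Second, the equality statement: ``saturating the variational bound'' does not by itself yield $g=1$ or conformal equivalence. The paper's route is to invoke Weinstock's inequality $\sigma_1 L\le 2\pi$: combined with $\sigma_1 L=2\pi g$ and $g\ge1$ this forces $g=1$ (equality in \autoref{le:ggeq1}), and Weinstock's equality analysis then yields the conformal equivalence to $(\D,p_c)$ and the disk conclusion when $q\equiv1$. Your sketch needs that external input; the purely internal rigidity argument (trial functions become eigenfunctions, hence the uniformizing map satisfies Cauchy--Riemann) appears in the paper only as an admittedly non-rigorous sketch. Finally, remember to verify $v_{k,c},v_{k,s}\in H^1(\Omega)$; this is precisely where the hypothesis $p\in L^2[0,2\pi]$ enters.
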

For the first eigenvalue, an old result of Weinstock \cite{W54} says
\begin{equation} \label{eq:weinstock}
\sigma_1(\Omega,q)L(\Sigma,q) \leq 2\pi ,
\end{equation}
which is stronger than \autoref{th:underlying} for $n=1$ since Weinstock does not need the factor $g \geq 1$. The theorem is new for $n \geq 2$. In \autoref{sec:comparison} we will compare with results in the literature, especially the work of Hersch--Payne---Schiffer. Note the sufficient condition for equality in the theorem can be improved using conformal invariance of harmonic functions --- see the sufficient condition for  \autoref{Corollary:SimplyConnectedEstimate}  below. 

Special choices of the concave function $C$ in the preceding theorem yield:
\begin{corollary}\label{Corollary:Special}
Each of the following spectral quantities on $\Omega$ with weight $q$ attains its maximum when $\Omega$ is a disk and $q$ is constant:
  \begin{align*}
    (\sigma_1^s+\cdots+\sigma_n^s)^{1/s}L/g,\qquad\sqrt[n]{\sigma_1\cdots\sigma_n} \, L/g ,
  \end{align*}
where $0 < s \leq 1$. Further, for $s<0<t$ the quantities 
\[
  \sum_{j=1}^n (\sigma_j L/g)^s \qquad \text{and} \qquad
  \sum_{j=1}^n \exp(-t \sigma_j L/g)
\]
are minimal when $\Omega$ is a disk and $q$ is constant.
\end{corollary}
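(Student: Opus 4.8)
The plan is to specialize \autoref{th:underlying} to four carefully chosen concave increasing functions $C$ and then rescale by the right power of $g$, so that the right-hand side of the theorem becomes exactly the value of the functional on the unit disk with constant weight. At that configuration a complex linear map $f$ has $\mu \equiv 0$, hence $a_0 \equiv a_1 \equiv 1$ and $g = 1$, while $L = 2\pi$ and $\sigma_j = \lceil j/2\rceil$; thus $2\pi g \lceil j/2 \rceil$ on the right side of the theorem is precisely the scale- and weight-invariant product $\sigma_j L$ evaluated on the disk with constant weight. Recognizing the right-hand side in this way is what pins the disk as the extremizer, and since the equality clause of \autoref{th:underlying} applies to every $C$, each extremum is genuinely attained there.

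For the two maximized quantities I would take $C(x) = x^s$ with $0 < s \le 1$ (increasing as $s > 0$, concave as $s \le 1$) and $C(x) = \log x$ (increasing and concave on $\R_+$). With $C(x) = x^s$ the theorem gives $\sum_{j=1}^n (\sigma_j L)^s \le (2\pi g)^s \sum_{j=1}^n \lceil j/2 \rceil^s$; factoring out $L^s$, raising to the power $1/s > 0$, and dividing by $g$ isolates $(\sigma_1^s + \cdots + \sigma_n^s)^{1/s} L/g \le 2\pi (\sum_{j} \lceil j/2\rceil^s)^{1/s}$, which is the disk value. With $C = \log$ the theorem yields $\sum_j \log(\sigma_j L) \le \sum_j \log(2\pi g \lceil j/2\rceil)$; dividing by $n$, exponentiating, and dividing by $g$ gives $\sqrt[n]{\sigma_1 \cdots \sigma_n}\, L/g \le 2\pi \sqrt[n]{\prod_j \lceil j/2\rceil}$.

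For the two minimized quantities the device is to negate. I would use $C(x) = -x^s$ with $s < 0$: then $C'(x) = -s x^{s-1} > 0$ and $C''(x) = -s(s-1) x^{s-2} < 0$, so $C$ is again concave increasing, and multiplying the theorem's inequality by $-1$ reverses it to $\sum_j (\sigma_j L)^s \ge (2\pi g)^s \sum_j \lceil j/2\rceil^s$; dividing by the positive quantity $g^s$ preserves the direction and yields $\sum_j (\sigma_j L/g)^s \ge (2\pi)^s \sum_j \lceil j/2\rceil^s$, so the disk minimizes. For the heat trace I would take $C(x) = -\exp(-tx/g)$ with $t > 0$; since $g$ is a fixed positive constant for the given $(\Omega,q,f)$, this is a legitimate concave increasing function, and after negating and cancelling the $g$ inside the exponent the theorem delivers $\sum_j \exp(-t\sigma_j L/g) \ge \sum_j \exp(-2\pi t \lceil j/2\rceil)$, again minimal on the disk.

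The computations are routine once the functions are selected; the only points needing care are confirming concavity and monotonicity of each $C$ on all of $\R_+$, and tracking the inequality direction through division by $g^s$ and the sign change when $s < 0$ or when negating. The mildly delicate case is the heat trace, where $g$ sits inside the exponential and must be absorbed into the constant multiplying $x$ in $C$ rather than factored out afterward; this is permitted precisely because \autoref{th:underlying} holds for every concave increasing $C$ while $g$ stays constant along the fixed domain.
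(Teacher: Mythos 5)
Your proof is correct and follows essentially the same route as the paper: apply \autoref{th:underlying} with $C(a)=a^s$, $C(a)=\log a$, $C(a)=-a^s$, and a negated exponential, then rescale by $g$ to identify the right-hand side as the disk value. The only cosmetic difference is in the heat-trace case, where you build the factor $1/g$ into $C$ itself while the paper writes $C(a)=-e^{-ta}$ and implicitly rescales $t$; these are equivalent since the theorem holds for all $t>0$.
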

The last two quantities are partial sums of the spectral zeta function and heat trace, respectively, where we have normalized the eigenvalues with $L/g$. 

\subsection*{Simply connected domains and conformal mapping}
Assume $\Omega$ is a simply connected, bounded planar domain with piecewise smooth boundary. The Riemann mapping theorem provides a conformal diffeomorphism
\[
f : \D \rightarrow \Omega .
\]
Because the boundary $\partial\Omega$ is piecewise smooth, the map $f$ extends to a homeomorphism of $\overline{\D}$ onto $\overline{\Omega}$ with $f : \partial \D \to \partial \Omega$ being smooth except at finitely many points. Then $|\partial_\theta f| = |f^\prime|$ on the unit circle, and so the boundary densities are related by 
\[
p=(q \circ f) |f^\prime| .
\]
 
We will need the  geometric quantity
\begin{equation} \label{eq:gammais}
\gamma(\Omega,q)=\frac{\left\{ \left( \frac{1}{2\pi}\int_0^{2\pi} p(\theta)^2 \, d\theta \right)^{\! 2} - \left| \frac{1}{2\pi}\int_0^{2\pi} p(\theta)^2 e^{i\theta} \, d\theta \right|^2 \right\}^{\! 1/4}}{\frac{1}{2\pi} \int_0^{2\pi} p(\theta) \, d\theta} .
\end{equation}
\autoref{Lemma:CenterofMass} shows that the expression on the right side of \eqref{eq:gammais} depends only on $\Omega$ and $q$, and not on the choice of conformal map $f$ through which the weight function $p$ was defined. The lemma will further show that 
\[
\gamma(\Omega,q) \geq 1.
\]
Obviously $\gamma(\Omega,q) = 1$ if $p$ is constant, by \eqref{eq:gammais}. 

Unfortunately, $\gamma$ could equal $+\infty$ or be undefined. For example, when $q$ is constant and $\Sigma$ contains a corner with interior angle $\alpha \pi$, the conformal map $f$ behaves locally like $(z-e^{i\theta_0})^{\alpha}$, and so $p^2 \sim |f^\prime|^2$ is nonintegrable along the circle if $\alpha \leq 1/2$. In particular, this happens when $\Sigma$ has a right angle ($\alpha=1/2$).  To avoid the problem, we will simply assume $p \in L^2[0,2\pi]$, so that $\gamma$ is finite.
\begin{corollary}[Simply connected planar domains] \label{Corollary:SimplyConnectedEstimate}
Assume $\Omega$ is a simply connected, bounded planar domain with piecewise smooth boundary, and consider weights $q$ and $p$ with $0<q \in L^\infty(\Sigma)$ and $p \in L^2[0,2\pi]$, as discussed above. Then for each $n \in \N$ and every concave increasing function $C : \R_+ \rightarrow \R$,
\[
  \sum_{j=1}^n C \big( \sigma_j(\Omega,q)L(\Sigma,q) \big) \leq
  \sum_{j=1}^n C \! \left( 2\pi \gamma(\Omega,q) \left\lceil \frac{j}{2} \right\rceil \right)
\]
with equality when $(\Omega,q)$ is conformally equivalent to $(\D,p_c)$ for some constant weight function $p_c$ (in which case $\gamma(\Omega,q)=1$). 

Equality statement for the first nonzero eigenvalue: if $\sigma_1(\Omega,q)L(\Sigma,q) = 2\pi \gamma(\Omega,q)$ then $(\Omega,q)$ is conformally equivalent to $(\D,p_c)$ for some constant weight function $p_c$. If in addition $q\equiv1$, then $\Omega$ is a disk.
\end{corollary}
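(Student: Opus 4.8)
The plan is to specialize \autoref{th:underlying} to \emph{conformal} maps $f : \D \to \Omega$ and then exploit the freedom in choosing such a map. For a conformal map the complex dilatation vanishes, $\mu \equiv 0$, so \eqref{eq:a0a1are} gives $a_0 \equiv a_1 \equiv 1$, whence $g_0 = 1$ and, by \eqref{eq:g01is}, $g = \sqrt{g_1}$ with $g_1 = \big(\frac{1}{2\pi}\int_0^{2\pi} p^2 \, d\theta\big)\big/\big(\frac{1}{2\pi}\int_0^{2\pi} p \, d\theta\big)^2$. \autoref{th:underlying} then delivers the claimed estimate, but with $g$ in place of $\gamma$. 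Since the eigenvalues $\sigma_j(\Omega,q)$ and the weighted length $L(\Sigma,q)$ are intrinsic to $(\Omega,q)$ whereas $g$ depends on the chosen conformal map, I am free to minimize $g$ over all conformal $f : \D \to \Omega$; the heart of the matter is to prove that this minimum equals $\gamma(\Omega,q)$.

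To minimize, fix one conformal map $f_0$ with boundary density $p_0$ and let $f = f_0 \circ \phi_a$ run over the precompositions with disk automorphisms $\phi_a(z) = (z-a)/(1-\bar a z)$, $|a|<1$; the additional rotational freedom merely reparametrizes the boundary by a shift and leaves $g_1$ unchanged, so only $a$ matters. The boundary correspondence of $\phi_a$ has Jacobian $d\theta/d\varphi$ equal to a Poisson kernel, so the transformed density is $p(\varphi) = p_0(\theta(\varphi)) \, (d\theta/d\varphi)$. Consequently $\frac{1}{2\pi}\int p \, d\varphi = \frac{1}{2\pi}\int p_0 \, d\theta$ is unchanged, while a change of variables gives
\[
  \frac{1}{2\pi}\int_0^{2\pi} p^2 \, d\varphi = \frac{1}{2\pi}\int_0^{2\pi} p_0(\theta)^2 \, \frac{|e^{i\theta}+a|^2}{1-|a|^2} \, d\theta = \frac{(1+|a|^2)A + 2\Real(\bar a B)}{1-|a|^2} ,
\]
where $A = \frac{1}{2\pi}\int p_0^2 \, d\theta$ and $B = \frac{1}{2\pi}\int p_0^2 e^{i\theta} \, d\theta$. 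Minimizing the right side over $a \in \D$ is a short calculus exercise: the optimal phase points $a$ opposite to $B$, and optimizing the modulus yields the minimum value $\sqrt{A^2-|B|^2}$, attained at an interior point $|a|<1$. Dividing by $\big(\frac{1}{2\pi}\int p \, d\theta\big)^2$ and taking the fourth root identifies $\min_f g$ with the expression \eqref{eq:gammais} defining $\gamma(\Omega,q)$. The minimizer is admissible for \autoref{th:underlying} because its density satisfies $\frac{1}{2\pi}\int p^2 \, d\varphi = \sqrt{A^2-|B|^2} < \infty$; applying the theorem to it proves the inequality with $\gamma$.

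The main obstacle is precisely this minimization and its bookkeeping: establishing the Poisson-kernel transformation law for $p$, confirming that $\frac{1}{2\pi}\int p \, d\theta$ really is invariant so that only $A$ and $B$ enter, and checking admissibility of the optimal map. The same transformation law underlies the conformal invariance of $\gamma$ recorded in \autoref{Lemma:CenterofMass}, namely that $A^2-|B|^2$ is unaffected by the automorphisms; and I would take the inequality $\gamma \geq 1$, hence $g \geq \gamma \geq 1$, from that lemma as well.

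For the equality statements I would argue as follows. If $\sigma_1(\Omega,q)L(\Sigma,q) = 2\pi\gamma(\Omega,q)$, take $f$ to be the optimal map, for which $g = \gamma$; the hypothesis becomes $\sigma_1 L = 2\pi g$, which triggers the equality case of \autoref{th:underlying}, so $(\Omega,q)$ is conformally equivalent to $(\D,p_c)$ for some constant weight $p_c$, and if moreover $q \equiv 1$ then $\Omega$ is a disk. Conversely, if $(\Omega,q)$ is conformally equivalent to $(\D,p_c)$ with $p_c$ constant, then $p$ may be taken constant, so $\gamma = 1$; and by conformal invariance of the normalized Steklov spectrum one computes $\sigma_j(\Omega,q)L(\Sigma,q) = 2\pi\lceil j/2 \rceil = 2\pi\gamma\lceil j/2\rceil$ for every $j$, giving equality throughout.
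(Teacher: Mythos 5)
Your proposal is correct and follows essentially the same route as the paper: specialize \autoref{th:underlying} to a conformal map (so $g_0=1$, $g^2=\gamma_1(p)$), then minimize $\gamma_1$ over precompositions with M\"obius automorphisms of the disk, which is exactly the content and computation of \autoref{Lemma:CenterofMass}(i) (your $A$, $B$, and the minimum value $\sqrt{A^2-|B|^2}$ match the paper's), and handle the equality statements via the equality case of \autoref{th:underlying} and conformal invariance. The only difference is organizational: you inline the minimization that the paper isolates as a separate lemma.
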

The corollary is deduced from \autoref{th:underlying} in \autoref{sec:simplyconn}. The analogue of \autoref{Corollary:Special} holds too, with $g$ replaced by $\gamma(\Omega,q)$ and with the maximum/minimum attained when $(\Omega,q)$ is conformally equivalent to $(\D,p_c)$ for some constant weight function $p_c$. Note that \autoref{Corollary:Special} covers finite sums of the reciprocals of the eigenvalues. This case, for simply connected domains, was already considered by Hersch, Payne and Schiffer \cite[Section 7]{HPS74}. Their lower bounds are stronger than the corresponding cases of our results, because they do not need the factor $g$, but on the other hand their results do not apply to sums or products of eigenvalues.

We illustrate \autoref{Corollary:SimplyConnectedEstimate} with several examples in \autoref{sec:Examples}.

\subsection*{Starlike domains}
A domain in the complex plane is \emph{starlike} if it can be expressed in the form
\[
\Omega = \{ re^{i\theta} : \theta \in [0,2\pi], 0 \leq r < R(\theta) \} 
\]
for some positive, $2\pi$-periodic function $R$ called the \emph{radius function} of $\Omega$. We assume $R$ is Lipschitz continuous. By abusing notation, we write 
\[
q(\theta) = q(R(\theta)e^{i\theta})
\]
for the weight function on $\Sigma = \partial \Omega$. 

\begin{lemma}[Geometric quantities] \label{le:gstarlike}
For the starlike case above, the geometric quantities $g_0,g_1$ defined in \eqref{eq:g01is} are
\begin{align}
  g_0(\Omega) & =1+\frac{1}{2\pi}\int_0^{2\pi}(\log R)'(\theta)^2\,d\theta, \label{eq:g0starlike} \\
  g_1(\Omega,q) & =\frac{\frac{1}{2\pi}\int_0^{2\pi} \big( R(\theta)^2+R^\prime(\theta)^2 \big) q(\theta)^2 \,d\theta}
  {\big(\frac{1}{2\pi}\int_0^{2\pi}\sqrt{R(\theta)^2+R^\prime(\theta)^2} \, q(\theta) \,d\theta\big)^{\! 2}} \, , \label{eq:g1starlike}
\end{align}
and in this case 
\[
g_0 \geq 1, \qquad g_1\geq 1 .
\]
Further, $g_0=1$ if and only if $R$ is constant, which means $\Omega$ is a disk centered at the origin. 
\end{lemma}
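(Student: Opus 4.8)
The plan is to produce the natural quasiconformal map for a starlike domain and then compute everything explicitly, since the whole lemma is a (delicate) calculation. For the radius function $R$ I would take the \emph{radial stretch}
\[
f(\rho e^{i\theta}) = \rho R(\theta) e^{i\theta}, \qquad 0 \le \rho \le 1,
\]
which carries the unit circle onto $\partial\Omega$ and which extends to a homeomorphism of the closures with absolutely continuous boundary map whenever $R$ is Lipschitz, so that \autoref{th:underlying} and the definitions \eqref{eq:a0a1are}--\eqref{eq:g01is} apply. The first step is the Wirtinger calculus in polar coordinates, using
\[
\partial = \frac{e^{-i\theta}}{2}\Bigl(\partial_\rho - \frac{i}{\rho}\partial_\theta\Bigr), \qquad
\overline{\partial} = \frac{e^{i\theta}}{2}\Bigl(\partial_\rho + \frac{i}{\rho}\partial_\theta\Bigr),
\]
together with $\partial_\rho f = R e^{i\theta}$ and $\partial_\theta f = \rho e^{i\theta}(R' + iR)$. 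I expect to obtain $\partial f = R - \tfrac{i}{2}R'$ and $\overline{\partial} f = \tfrac{i}{2}e^{2i\theta}R'$, hence $\mu = \overline{\partial} f/\partial f = e^{2i\theta}\beta$ with $\beta = iR'/(2R - iR')$. In particular $\mu$ depends on $\theta$ alone (so the angular assumption holds) and $|\mu|^2 = R'^2/(4R^2 + R'^2) < 1$, confirming quasiconformality.

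The second step substitutes into \eqref{eq:a0a1are}. Factoring $\mu = e^{2i\theta}\beta$ gives $e^{2i\theta} \mp \mu = e^{2i\theta}(1 \mp \beta)$, so the unimodular prefactor drops out of the moduli and $a_0 = |1-\beta|^2/(1-|\beta|^2)$, $a_1 = |1+\beta|^2/(1-|\beta|^2)$. Computing $1 - \beta = 2(R - iR')/(2R - iR')$, $1 + \beta = 2R/(2R - iR')$ and $1 - |\beta|^2 = 4R^2/(4R^2 + R'^2)$, the crucial observation is that the denominator $1 - |\beta|^2$ coincides exactly with $|1+\beta|^2$. This collapses $a_1 \equiv 1$, while
\[
a_0 = \frac{R^2 + R'^2}{R^2} = 1 + \bigl((\log R)'\bigr)^2 .
\]
Averaging $a_0$ over the circle yields \eqref{eq:g0starlike}. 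For the boundary density, on the unit circle $f(e^{i\theta}) = R(\theta)e^{i\theta}$ and $|\partial_\theta f| = |R' + iR| = \sqrt{R^2 + R'^2}$, whence $p = (q\circ f)|\partial_\theta f| = q(\theta)\sqrt{R^2 + R'^2}$; since $a_1 \equiv 1$, the definition \eqref{eq:g01is} of $g_1$ becomes the ratio of $\tfrac{1}{2\pi}\int p^2$ to $\bigl(\tfrac{1}{2\pi}\int p\bigr)^2$, and substituting $p$ gives \eqref{eq:g1starlike}.

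Finally I would dispatch the inequalities. From \eqref{eq:g0starlike}, $g_0 \geq 1$ because the integrand $\bigl((\log R)'\bigr)^2$ is nonnegative, with equality precisely when $(\log R)' \equiv 0$; as $R$ is Lipschitz (hence $\log R$ absolutely continuous) this forces $R$ constant, i.e.\ $\Omega$ a disk centered at the origin, which is the stated equality case. For $g_1$, the reduced form makes $g_1 \geq 1$ immediate from the Cauchy--Schwarz (equivalently Jensen) inequality $\bigl(\tfrac{1}{2\pi}\int p\bigr)^2 \leq \tfrac{1}{2\pi}\int p^2$. I do not anticipate a genuine obstacle here: the lemma is computational throughout, and the only point requiring care is executing the polar Wirtinger derivatives correctly and noticing the cancellation $1-|\beta|^2 = |1+\beta|^2$ that reduces $a_1$ to $1$ — once that is in hand, everything downstream is mechanical.
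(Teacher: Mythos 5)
Your proposal is correct and follows essentially the same route as the paper (its Example~\ref{ex:starlike}): the radial stretch map, the polar Wirtinger derivatives giving $\mu = e^{2i\theta}\,iR'/(2R-iR')$, the collapse $a_1\equiv 1$ and $a_0 = 1+((\log R)')^2$, the density $p=q\sqrt{R^2+R'^2}$, and Cauchy--Schwarz for $g_1\ge 1$. The only point to state a bit more carefully is that quasiconformality needs the \emph{uniform} bound $\lVert\mu\rVert_{L^\infty}<1$, which follows because $R$ is bounded below away from zero and $R'$ is bounded (Lipschitz), exactly as the paper notes.
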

Notice that these formulas for $g_0$ and $g_1$ are scale invariant (homogeneous with respect to the radius function $R$).  The equality statement for $g_1$ is more complicated, and we discuss it after the proof of the lemma in \autoref{sec:starlike}

Now we state the result for starlike domains. Here 
\[
g = \sqrt{g_0 g_1}
\]
is the geometric average of the two quantities given in \eqref{eq:g0starlike} and \eqref{eq:g1starlike}. 

\begin{corollary}[Starlike planar domains] \label{Corollary:MainStarlikePlane}
Assume $\Omega$ is a starlike planar domain with Lipschitz continuous radius function, and positive weight function $q \in L^\infty(\Sigma)$. 

Then for each $n \in \N$ and every concave increasing function $C : \R_+ \rightarrow \R$,
\[
  \sum_{j=1}^n C \big( \sigma_j(\Omega,q)L(\Sigma,q) \big) \leq
  \sum_{j=1}^n C \! \left( 2\pi g(\Omega,q) \left\lceil \frac{j}{2} \right\rceil \right)
\]
with equality when $\Omega$ is a disk centered at the origin and $q \equiv \text{const.}$ 

Equality statement for the first nonzero eigenvalue: if $\sigma_1(\Omega,q)L(\Sigma,q) = 2\pi g(\Omega,q) $ then $\Omega$ is a disk centered at the origin and $q \equiv \text{const.}$
\end{corollary}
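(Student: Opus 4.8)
The plan is to deduce the corollary from \autoref{th:underlying} by exhibiting an explicit quasiconformal map adapted to the starlike geometry, and then to read off the equality conditions from \autoref{le:ggeq1} in the language of the radius function. The natural choice of map is the radial stretch
\[
f(re^{i\theta}) = r R(\theta) e^{i\theta} ,
\]
which carries the unit disk onto $\Omega$, sends the circle of radius $r$ to the scaled boundary curve, and fixes angles. First I would check that $f$ meets all the hypotheses of the theorem. Since $R$ is Lipschitz and bounded below by a positive constant, $f$ is a homeomorphism of $\overline{\D}$ onto $\overline{\Omega}$ whose boundary restriction is Lipschitz, hence absolutely continuous. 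Computing the Wirtinger derivatives in polar form gives $\partial f = (2R - iR')/2$ and $\overline{\partial} f = (iR'/2)e^{2i\theta}$, so the complex dilatation $\mu = iR'e^{2i\theta}/(2R - iR')$ depends only on $\theta$ and satisfies $|\mu|^2 = R'^2/(4R^2 + R'^2) < 1$ uniformly; this is the content of \autoref{ex:starlike}. Finally $p = (q\circ f)\,|\partial_\theta f| = q\sqrt{R^2 + R'^2}$ lies in $L^\infty \subset L^2[0,2\pi]$.

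With these verifications in hand, the inequality is immediate from \autoref{th:underlying}, provided $g = \sqrt{g_0 g_1}$ agrees with the radius-function expression. That agreement is exactly \autoref{le:gstarlike}, which I would invoke; its proof is a direct substitution of the computed $\mu$ and $p$ into the definitions \eqref{eq:a0a1are} and \eqref{eq:g01is}, yielding $a_0 = 1 + (\log R)'^2$ and $a_1 \equiv 1$. The sufficient condition for equality then follows at once: if $\Omega$ is a disk centered at the origin and $q$ is constant, then $R$ is constant, the radial stretch reduces to the complex-linear map $z \mapsto R z$, and the equality clause of \autoref{th:underlying} applies directly.

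The substantive part is the equality statement for the first eigenvalue, and this is where I expect the main difficulty. By the corresponding clause of \autoref{th:underlying}, the hypothesis $\sigma_1(\Omega,q)L(\Sigma,q) = 2\pi g$ forces equality in \autoref{le:ggeq1}, so both of its equality conditions hold. I would then translate them into starlike language. For the first condition, $e^{-2i\theta}\mu = iR'/(2R - iR')$ must lie in $(-1,1)$; its imaginary part equals $2RR'/(4R^2 + R'^2)$, which vanishes only when $R' = 0$. Since $R$ is Lipschitz, $R' = 0$ almost everywhere forces $R$ to be constant, so $\Omega$ is a disk centered at the origin. For the second condition, $|\partial_r f|\,(q\circ f) = R\,q$ must be constant, which with $R$ already constant reduces to $q$ constant. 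The delicate point is to ensure that the Beltrami equation holds on the unit circle, as required by the equality hypothesis of \autoref{le:ggeq1}; this holds here because $f$ is given by an explicit formula whose Wirtinger derivatives extend continuously wherever $R'$ exists, that is, almost everywhere. Combining the two conclusions gives that $\Omega$ is a disk centered at the origin and $q$ is constant, as claimed.
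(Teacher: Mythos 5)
Your argument is correct, and for the main inequality it coincides with the paper's: the radial stretch $f(re^{i\theta})=rR(\theta)e^{i\theta}$ of \autoref{ex:starlike} is checked against the hypotheses of \autoref{th:underlying}, and \autoref{le:gstarlike} identifies $g=\sqrt{g_0g_1}$ with the radius-function expression. The only divergence is in the equality statement for $\sigma_1$. You route it through the equality clause of \autoref{th:underlying} and then through the equality conditions of \autoref{le:ggeq1}, translating $e^{-2i\theta}\mu\in(-1,1)$ into $R'\equiv 0$ and $|\partial_r f|\,(q\circ f)=Rq=\text{const.}$ into $q$ constant; this is precisely the route the paper records as an ``alternative,'' and it obliges you (as you correctly note) to justify that the Beltrami equation holds a.e.\ on the unit circle for the explicit stretch map. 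The paper's primary argument sidesteps that boundary issue entirely: Weinstock's inequality \eqref{eq:weinstock} forces $g=1$, and since $g_0\geq 1$ and $g_1\geq 1$ separately, both must equal $1$; then $g_0=1$ in \eqref{eq:g0starlike} forces $R$ constant directly, and $g_1=1$ in \eqref{eq:g1starlike} together with Cauchy--Schwarz forces $q$ constant. Both routes are sound; the paper's is slightly cleaner because it never invokes the boundary form of the Beltrami equation, while yours has the virtue of exercising the general equality machinery of \autoref{le:ggeq1} and your computation of $\operatorname{Im}(e^{-2i\theta}\mu)=2RR'/(4R^2+R'^2)$ makes the reduction to $R'\equiv 0$ explicit.
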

We show in \autoref{sec:starlike} how to obtain the corollary from \autoref{th:underlying}. Of course, the analogue of \autoref{Corollary:Special} (more general spectral functionals) holds for starlike domains too.

The examples in \autoref{sec:Examples} illustrate the conformal and starlike methods.

\section{\bf Related work in the literature, and comparison with the Hersch--Payne--Schiffer result}
\label{sec:comparison}

\subsection*{Prior work}
This paper proves sharp upper bounds on Steklov eigenvalue sums, under normalization by the perimeter and the additional geometric quantity $g$ or $\gamma$. 

The first geometric upper bound for Steklov eigenvalues is that of Weinstock~\cite{W54}, who proved that among simply connected planar domains of given perimeter, $\sigma_1$ is maximal on a disk. Some years afterward, Hersch--Payne--Schiffer~\cite{HPS74} used a subtle complex analytic method to get bounds on sums of reciprocal eigenvalues and on each individual eigenvalue $\sigma_j$. Their bound on $\sigma_j$ was recently shown to be sharp~\cite{GP10b}. Later in this section, we compare our work with that of Hersch, Payne and Schiffer. 

Uniformization theory enables these results to be generalized to
compact Riemann surfaces with boundary, a setting in which the upper
bounds involve also the number of boundary components and the
genus~\cite{FS11,GP12}. In dimension $\geq 3$, additional geometric
upper bounds have been obtained, as follows. Brock~\cite{B01}
considered domains with fixed volume rather than fixed perimeter, in $\R^n$, and
proved that the ball minimizes the sum of reciprocals $\sum_{j=1}^n
\sigma_j^{-1}$. On compact manifolds, an upper bounds for $\sigma_1$
was given by Fraser--Schoen~\cite{FS11}, in terms of the volume and a quantity which they called the
\emph{relative conformal volume}. For domains, methods from metric geometry were used by Colbois
\emph{et al.}~\cite{CEG11} to bound each individual eigenvalue
$\sigma_j$ in terms of the perimeter and volume of the domain, and this work was recently improved by Hassannezhad~\cite{H11}. For compact hypersurfaces with boundary,
Ilias--Makhoul~\cite{IM11} proved upper bounds for $\sigma_1$ in terms of various
mean curvatures of the boundary.

Turning now to lower bounds, the minimum of each eigenvalue $\sigma_j$
among domains of fixed perimeter or fixed volume is easily seen to be
zero, by a ``pinching'' construction \cite[Section
2.2]{GP10b}. Geometric lower bounds must therefore involve some other restrictions. An early result is
that of Kuttler--Sigillito~\cite{KSi1}, who considered planar starlike
domains and gave a bound in terms of the radius function and its
derivative (see \autoref{sec:starlike}). One should also mention a
recent paper of Jammes~\cite{J14}, where a lower bound in the spirit
of the classical Cheeger inequality is proved for the first nonzero
Steklov eigenvalue. See also \cite{E97}.

Regarding other eigenvalue functionals, Dittmar~\cite{D04} proved that
among simply connected planar domains with given conformal radius, the
disk minimizes the infinite sum of reciprocals of all squares,
$\sum_{j=1}^\infty\sigma_j^{-2}$. Henrot--Philippin--Safoui~\cite{HPS08}
proved that among convex domains of fixed measure in $\R^n$, the
product of the first $n$ nonzero Steklov eigenvalues is maximal for a
ball. Their method is based on an isoperimetric inequality for moment
of inertia. Edward~\cite{E94} proved for simply connected domains $\Omega$ of perimeter $2\pi$ that the relative sum of squares is minimal for the unit disk: $\sum_j \big( \sigma_j(\Omega)^2-\sigma_j(\D)^2 \big) \geq 0$.

Incidentally, to justify the interpretation of the Steklov problem in terms of a membrane whose mass is concentrated at the boundary, one may compare the Rayleigh quotient \eqref{eq:RayStek} for the Steklov problem with the usual Rayleigh quotient for the Neumann Laplacian. For spectral convergence results as the mass concentrates onto the boundary, see recent work of Lamberti and Provenzano \cite{LP14}. 

The literature on the spectral geometry of the Steklov problem is
expanding rapidly, and so we had to omit many papers here. We refer to~\cite{GP10,GP14} for recent surveys.

\subsection*{Comparison with Hersch--Payne--Schiffer result}
For simply connected planar domains, the Hersch--Payne--Schiffer (HPS) inequality \cite{HPS74} 
states that each individual Steklov eigenvalue is bounded according to
\begin{equation}\label{HPSoriginal}
\sigma_j L\leq 2\pi j. 
\end{equation}
Taking $j=1$ recovers Weinstock's inequality \eqref{eq:weinstock}. Equality in \eqref{HPSoriginal} is approached by a sequence of domains tending to a disjoint union of identical disks, as Girouard and Polterovich \cite{GP10} later showed. Summing the HPS inequality leads to
\begin{equation}\label{HPS}
\sum_{j=1}^n\sigma_j L\leq 2\pi \sum_{j=1}^n j=\pi n(n+1) .
\end{equation}
This ``summed HPS'' bound is \emph{not} expected to be sharp, since the original HPS inequality \eqref{HPSoriginal} has a different optimizing domain for each $j$. Thus one can hope that our estimates in \autoref{Corollary:SimplyConnectedEstimate} and \autoref{Corollary:MainStarlikePlane} improve on the summed HPS bound. 

To compare, notice \autoref{Corollary:SimplyConnectedEstimate} implies that 
\begin{equation} \label{eq:HPScomp}
\sum_{j=1}^n\sigma_j L
\leq 2\pi \gamma \sum_{j=1}^n\left\lceil\frac{j}{2}\right\rceil 
= \frac{\pi}{2} \gamma 
\begin{cases}
n (n+2) & \text{if $n$ is even,} \\
(n+1)^2 & \text{if $n$ is odd.} 
\end{cases}
\end{equation}
Hence our bound \eqref{eq:HPScomp} improves on the HPS sum inequality \eqref{HPS} if the geometric factor $\gamma$ satisfies
\[
\gamma < 2 \frac{n+1}{n+2} \qquad \text{($n$ even)}
\]
or 
\[
\gamma < 2 \frac{n}{n+1} \qquad \text{($n$ odd).}
\]
In particular, our bound improves on the HPS sum inequality for all $n \geq 2$ if $\gamma < 3/2$, and improves on it for all large $n$ if $\gamma < 2$. For $n=1$ the HPS--Weinstock bound is  always better, since it does not involve the factor $\gamma \geq 1$. For a starlike domain one obtains the same criteria except with $g$ instead of $\gamma$, by \autoref{Corollary:MainStarlikePlane}. 

Thus for domains close to a disk, our bounds are better by a factor of between $3/2$ and $2$, because $\gamma$ and $g$ are close to $1$ in that case. \autoref{examples} provides more detailed information for some example domains, which are not necessarily close to a disk. 

\subsection*{How our method differs from that of Hersch--Payne--Schiffer} Our trial function method is related to that of Hersch, Payne and Schiffer \cite{HPS74} for the ``conformal'' case (\autoref{Corollary:SimplyConnectedEstimate}), except with a crucial interchange in the order of operations. They proceed as follows: take a Steklov eigenfunction on the unit circle, pre-compose with a uniformization of the circle to push $p$ forward to a constant density,  harmonically extend this composition to the unit disk, and pre-compose the resulting harmonic function with a conformal map from $\Omega$ to the disk, thus obtaining a harmonic trial function on $\Omega$. Note the harmonic extension is performed after the uniformization step. In contrast, in this paper the harmonic extension is carried out before the uniformization step. In other words, we uniformize the Steklov eigenfunction on the whole disk, not just on the circle; see \autoref{sec:underlying} and \autoref{sec:simplyconn} for details.

Both methods preserve length measure on the boundary and hence preserve orthogonality of trial functions (see \cite[p.~101]{HPS74}), but our harmonic extension is easier to work with since it is explicit ($\cos k\theta$ extends harmonically to $r^k \cos k\theta$) and does not get tangled up with the uniformizing map. Further, the method of Hersch \emph{et al.}\ relies very much on conformal invariance of the Dirichlet integral, whereas our approach can handle non-invariance of the Dirichlet integral due to certain quasiconformal maps (see \autoref{sec:underlying}).

\section{\bf Open problems}
\label{sec:problems}

Our theorems maximize Steklov eigenvalue sums with the help of the geometric factor $\gamma$ or $g$. What can one say in the absence of that factor, in other words, if one normalizes solely by perimeter?

For the first eigenvalue, Weinstock's theorem \eqref{eq:weinstock} says $\sigma_1 L$ is maximal for the disk among all simply connected domains. For the second eigenvalue, maximality of $\sigma_2 L$ for the double-disk (in a limiting sense) was proved by Hersch, Payne and Schiffer \cite[formula (3$''$)]{HPS74} and Girouard and Polterovich \cite[\S1.3]{GP10}. 

We ask: 
\[
\text{what domain maximizes $(\sigma_1+\sigma_2)L$?}
\]
The maximizer is certainly not the disk, since an ellipse can give a larger value (see \autoref{tab:ellipse} later in the paper, which shows ``$\rho_2>1$'' for certain ellipses). Thus the disk does not maximize the \emph{arithmetic} mean $\frac{1}{2} (\sigma_1 + \sigma_2)L$. Interestingly, the disk does maximize the \emph{harmonic} mean of those first two eigenvalues among simply connected domains, by Hersch and Payne's extension of Weinstock's method \cite{HP68,W54}. The disk even maximizes the \emph{geometric} mean $\sqrt{\sigma_1 \sigma_2} L$ by Hersch, Payne and Schiffer \cite[formula (1)]{HPS74}. 

Numerical investigations suggest that the extremizer for $(\sigma_1 + \sigma_2)L$ must be somewhat elongated, possibly stadium-like. We remark on a qualitative similarity with the optimal shape for the second Dirichlet eigenvalue, investigated by Bucur-Buttazzo-Henrot \cite{BBH09} and Henrot-Oudet \cite{HO01}, which is stadium-like but not a stadium.

More generally, one would like to maximize the sum $(\sigma_1 + \cdots + \sigma_n)L$. Numerical experiments suggest an appealing open problem for equilateral triangles: to prove that the eigenvalue sum is bounded above by the corresponding sum for the disk. 

See the examples in \autoref{sec:Examples} for more information on eigenvalue sums.

\subsection*{Higher dimensions}
We have not extended our conformal and quasiconformal mapping results to higher dimensions, because the Dirichlet integral fails to transform nicely under such maps. The starlike special case can be extended to higher dimensions, but the resulting geometric quantity is considerably more complicated than in $2$ dimensions, and so we omit these results.

\section{\bf Quasiconformal mapping and angular uniformization --- Proof of \autoref{th:underlying}}
\label{sec:underlying}

We establish some lemmas and then prove the theorem. 
\begin{proof}[Proof of \autoref{le:ggeq1}]
Notice 
\begin{align*}
a_0 a_1 
& = \left( \frac{|e^{4i\theta}-\mu^2|}{1-|\mu|^2} \right)^{\! 2} \geq 1
\end{align*}
by the triangle inequality. An application of Cauchy--Schwarz now gives $g_0 g_1 \geq 1$. 

If $g_0 g_1 = 1$, then the preceding argument implies that $a_0 a_1 = 1$ a.e.\ (using here that $p>0$). Hence the equality condition for the triangle inequality requires that $\arg (\mu^2) = 4\theta$ a.e., which implies $\mu=\pm |\mu| e^{2i\theta}$. Thus $e^{-2i\theta} \mu$ is real, and it lies between $-1$ and $1$, by assumption on $\mu$.

Further, the equality condition for Cauchy--Schwarz implies $c\sqrt{a_0} = \sqrt{a_1 p^2}$ a.e.\ for some constant $c>0$. Since $a_0 a_1=1$ we deduce $ca_0 = p$ a.e., which says
\[
c \frac{(1-e^{-2i\theta}\mu)^2}{1-(e^{-2i\theta} \mu)^2} = (q \circ f) |\partial_\theta f| ,
\]
where we used that $e^{-2i\theta} \mu \in (-1,1)$. After employing the polar identities 
\[
\partial_\theta f = ire^{i\theta}  (1 - e^{-2i\theta} \mu) \partial f, \qquad \partial_r f = e^{i\theta} (1 + e^{-2i\theta} \mu ) \partial f,
\]
in this last equation (and putting $r=1$), we obtain the condition $(q \circ f) |\partial_r f| = c$. 

Reversing the argument shows that the necessary conditions for equality are also sufficient. 

\emph{Comment.} It would be interesting to show that the quantity $g$ controls the deviation of the domain from the disk, that is, to show that if $g$ is close to $1$, then $(\Omega,p)$ must be close to a disk with constant weight function, after a suitable conformal mapping. 
\end{proof}

Next we need a transformation property of the Dirichlet integral under a quasiconformal mapping. 
\begin{lemma} \label{le:Dirichlet_qc}
Suppose $f : \D \rightarrow \Omega$ is a quasiconformal mapping from the disk to a planar domain $\Omega$, and that the complex dilatation $\mu$ depends only on the angular variable $\theta$. Given a real-valued function $h \in H^1 \cap L^\infty_{loc}(\D)$, the Dirichlet integral of $h \circ f^{-1}$ on $\Omega$ can be evaluated over the disk in the following polar form:
\begin{equation} \label{eq:transform}
\int_\Omega |\nabla (h \circ f^{-1})|^2 \, dA = 
\int_\D \Big\{ a_0 h_r^2 + a_1 \frac{h_\theta^2}{r^2} + a_2 h_r \frac{h_\theta}{r} \Big\} \, r dr d\theta 
\end{equation}
where $a_0$ and $a_1$ were defined in \eqref{eq:a0a1are}, and 
\begin{equation} \label{eq:a2is}
a_2 = \frac{2 \Imag \overline{(e^{2i\theta}+\mu(e^{i\theta}))} (e^{2i\theta}-\mu(e^{i\theta}))}{1-|\mu(e^{i\theta})|^2} .
\end{equation}
\end{lemma}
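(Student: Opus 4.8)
The plan is to compute the Dirichlet integral by changing variables via $f$ and expressing everything through the polar derivatives of $h$ and the Beltrami coefficient $\mu$. First I would recall the standard formula for how the gradient transforms under a quasiconformal map. Writing $w = h \circ f^{-1}$, so that $h = w \circ f$, the chain rule in Wirtinger form gives $\partial h = (\partial w \circ f)\,\partial f + (\overline{\partial} w \circ f)\,\overline{\partial f}$ and the conjugate equation for $\overline{\partial} h$. Using the Beltrami relation $\overline{\partial} f = \mu\,\partial f$ and solving this linear system for $\partial w$ and $\overline{\partial} w$ in terms of $\partial h, \overline{\partial} h, \mu$, I would then form the Dirichlet integrand $|\nabla w|^2 = 4|\partial w|^2 + 4|\overline{\partial} w|^2$ (equivalently $4(|\partial w|^2 + |\overline{\partial} w|^2)$, depending on convention). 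The Jacobian of $f$ is $|\partial f|^2 - |\overline{\partial} f|^2 = (1-|\mu|^2)|\partial f|^2$, and since $h$ is real we have $\overline{\partial} h = \overline{\partial h}$. Pushing the integral over $\Omega$ back to $\D$ multiplies the integrand by this Jacobian, and the factors of $|\partial f|^2$ should cancel, leaving an expression in $\partial h$, $\overline{\partial h}$, and $\mu$ alone.

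The next step is to convert the Cartesian Wirtinger derivatives of $h$ into the polar derivatives $h_r$ and $h_\theta$. I would use the polar identities already recorded in the proof of \autoref{le:ggeq1}, namely $\partial_\theta f = i r e^{i\theta}(1 - e^{-2i\theta}\mu)\,\partial f$ and $\partial_r f = e^{i\theta}(1 + e^{-2i\theta}\mu)\,\partial f$; the analogous chain-rule identities for a general function give $\partial h = \tfrac{1}{2}e^{-i\theta}(h_r - i h_\theta/r)$ and $\overline{\partial} h = \tfrac{1}{2}e^{i\theta}(h_r + i h_\theta/r)$ when $h$ is real. Substituting these into $|\partial w|^2 + |\overline{\partial} w|^2$ (after multiplication by the Jacobian) yields a real quadratic form in $h_r$ and $h_\theta/r$. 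I would then collect coefficients: the $h_r^2$ coefficient should simplify to $a_0$, the $(h_\theta/r)^2$ coefficient to $a_1$, and the cross term to $a_2$ as defined in \eqref{eq:a0a1are} and \eqref{eq:a2is}. The appearance of $e^{2i\theta} \pm \mu$ in the formulas is precisely what emerges from the combination $e^{i\theta}(1 \pm e^{-2i\theta}\mu) = e^{-i\theta}(e^{2i\theta} \pm \mu)$, so the polar rotation factors $e^{\pm i\theta}$ interact with $\mu$ to produce exactly these expressions.

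The main obstacle will be the bookkeeping in the algebraic simplification: one must carefully solve the two-by-two Beltrami system for $\partial w, \overline{\partial} w$, track the complex conjugates correctly (using that $h$ and $w$ are real so that $\overline{\partial w}$ relations hold), and verify that after multiplying by the Jacobian $(1-|\mu|^2)|\partial f|^2$ the unwanted $\partial f$ factors cancel cleanly, leaving coefficients depending on $\mu$ and $\theta$ only. Extracting the real and imaginary parts to match $a_0, a_1, a_2$ — in particular recognizing the imaginary-part structure in $a_2$ — is the delicate point, but it is routine once the substitution is set up. A secondary technical issue is justifying the change of variables itself: I would invoke the absolute continuity on lines and the membership $h \in H^1 \cap L^\infty_{loc}(\D)$ to ensure $w \in H^1(\Omega)$ and that the quasiconformal change-of-variables formula for the Dirichlet integral is valid, citing the standard theory in \cite[Chapter IV]{LV73}.
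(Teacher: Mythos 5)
Your proposal is correct and follows essentially the same route as the paper: chain rule plus change of variables with the Jacobian $J(f)=|\partial f|^2-|\overline{\partial}f|^2=(1-|\mu|^2)|\partial f|^2$, followed by conversion to polar derivatives and identification of the coefficients $a_0,a_1,a_2$; the only organizational difference is that you solve the Wirtinger/Beltrami linear system for $\partial w,\overline{\partial}w$, whereas the paper inverts the real $2\times 2$ Jacobian matrix and writes $|\nabla(h\circ f^{-1})|^2\circ f=|(\nabla h)U\cdot\nabla f|^2/J(f)^2$ before passing to Wirtinger derivatives of $f$. One detail to fix in the write-up: for real $w$ the identity is $|\nabla w|^2=2\bigl(|\partial w|^2+|\overline{\partial}w|^2\bigr)=4|\partial w|^2$, not $4\bigl(|\partial w|^2+|\overline{\partial}w|^2\bigr)$, and the conjugate in the chain rule should be $\partial h=(\partial w\circ f)\,\partial f+(\overline{\partial}w\circ f)\,\overline{\overline{\partial}f}$; the paper also records the explicit $C^1$-then-density step for extending to $h\in H^1\cap L^\infty_{loc}(\D)$, which your appeal to standard theory should be expanded to match.
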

Note that $a_0,a_1,a_2$ are all bounded, since by definition the complex dilatation of a quasiconformal map satisfies $\lVert \mu \rVert_{L^\infty(\D)} < 1$. We have assumed in this lemma that $\mu$ depends only on $\theta$, but that is simply to remain consistent with the rest of the paper; in fact, the lemma and its proof hold without that assumption. 
\begin{proof}
First assume $h \in C^1(\overline{\D})$. We will show
\begin{equation} \label{eq:calculus}
|\nabla (h \circ f^{-1})|^2 \circ f = \Big| - f_r \frac{h_\theta}{r} + \frac{f_\theta}{r} h_r \Big|^2 \big/ J(f)^2
\end{equation}
a.e.\ in $\D$, where $J(f)$ denotes the Jacobian determinant of $f$. (We may differentiate pointwise, since quasiconformal mappings are differentiable a.e.) Indeed, by the chain rule, 
\begin{align*}
|\nabla (h \circ f^{-1})|^2 \circ f
& = \big| (\nabla h) D(f^{-1}) \circ f \big|^2 \\
& = \big| (\nabla h) (Df)^{-1} \big|^2 \\
& = \left| 
\begin{pmatrix} h_x & h_y \end{pmatrix} 
\begin{pmatrix} \ c_y & -b_y \\ -c_x & \ b_x \end{pmatrix} 
\right|^2 \Big/ J(f)^2
\end{align*}
where we have written $f=b+ic$ so that $f$ has Jacobian matrix
\[
Df = 
\begin{pmatrix} b_x & b_y \\ c_x & c_y \end{pmatrix} .
\]
The last formula can be rewritten as 
\begin{align*}
|\nabla (h \circ f^{-1})|^2 \circ f
& = \big|
(-h_y,h_x) \cdot 
(b_x + i c_x,b_y + ic_y) 
\big|^2 \big/ J(f)^2 \\
& = |(\nabla h)U \cdot \nabla f|^2 \big/ J(f)^2
\end{align*}
where $U= \left( \begin{smallmatrix} 0 & 1 \\ -1 & 0 \end{smallmatrix} \right)$ represents rotation by $\pi/2$. Expressing the last two gradient vectors in polar coordinates, we have
\begin{align*}
|\nabla (h \circ f^{-1})|^2 \circ f
& = \big| (-r^{-1} h_\theta \vec{e}_r + h_r \vec{e}_\theta) \cdot (f_r \vec{e}_r + r^{-1} f_\theta \vec{e}_\theta) \big|^2 \big/ J(f)^2 ,
\end{align*}
where $\vec{e}_r$ and $\vec{e}_\theta$ are the unit vectors in the radial and angular directions. Now \eqref{eq:calculus} follows immediately. 

To prove formula \eqref{eq:transform}, we multiply \eqref{eq:calculus} by $J(f)$ and integrate over $\D$ to find
\begin{align*}
\int_\Omega |\nabla (h \circ f^{-1})|^2 \, dA
& = \int_\D \Big| - f_r \frac{h_\theta}{r} + \frac{f_\theta}{r} h_r \Big|^2 \big/ J(f) \, dA \\
& = \int_\D \Big\{ \Big| \frac{f_\theta}{r} \Big|^2 h_r^2 + |f_r|^2 \frac{h_\theta^2}{r^2} - 2 \Real \Big( \overline{f_r} \frac{f_\theta}{r} \Big) h_r \frac{h_\theta}{r}\Big\} \big/ J(f) \, dA .
\end{align*}
Note the Jacobian determinant can be expressed in terms of Wirtinger derivatives as 
\[
J(f) = |\partial f|^2 - |\overline{\partial} f|^2 ,
\]
while the polar derivatives can be expressed as 
\[
f_r = e^{i\theta} \partial f + e^{-i\theta} \overline{\partial} f , \qquad \frac{f_\theta}{ir} = e^{i\theta} \partial f - e^{-i\theta} \overline{\partial} f .
\]
Substituting these expressions, we find
\[
\int_\Omega |\nabla (h \circ f^{-1})|^2 \, dA
= \int_\D \Big\{ \frac{|e^{2i\theta} \partial f - \overline{\partial} f |^2}{|\partial f|^2 - |\overline{\partial} f|^2} h_r^2 + \frac{|e^{2i\theta} \partial f + \overline{\partial} f |^2}{|\partial f|^2 - |\overline{\partial} f|^2} \frac{h_\theta^2}{r^2} - \frac{2 \Real \Big( \overline{f_r} \frac{f_\theta}{r} \Big)}{|\partial f|^2 - |\overline{\partial} f|^2} h_r \frac{h_\theta}{r}\Big\} \, dA .
\]
The coefficients of $h_r^2$ and $h_\theta^2/r^2$ equal $a_0$ and $a_1$, respectively, after dividing the top and bottom lines by $|\partial f|^2$, and similarly the coefficient of the mixed term equals $a_2$. That completes the proof of \eqref{eq:transform}, when $h \in C^1(\overline{\D})$ and the gradient $\nabla(h \circ f^{-1})$ is evaluated pointwise a.e. This pointwise gradient is also the weak gradient, since quasiconformal mappings are absolutely continuous on lines, and so \eqref{eq:transform} holds also in terms of weak derivatives. Hence in particular $\nabla (h \circ f^{-1}) \in L^2(\Omega)$.

To extend \eqref{eq:transform} to the general case of $h \in H^1 \cap L^\infty_{loc}(\D)$, one argues using the density of $C^1(\overline{\D})$ in the Sobolev space. Local boundedness of $h$ in the disk is used to insure local integrability of $h \circ f^{-1}$ in $\Omega$, so that the weak derivative may be defined. 
\end{proof}

Now we can prove the main result. 
\begin{proof}[Proof of \autoref{th:underlying}]
The Steklov spectrum on $\Omega$ has Rayleigh quotient
\begin{equation} \label{eq:RayStek}
\Ray[v]=\frac{\int_\Omega |\nabla v|^2 \, dA}{\int_\Sigma v^2 q \, ds} , \qquad v \in H^1(\Omega).
\end{equation}
The bulk of the proof consists of computing and averaging this Rayleigh quotient for a family of trial functions that we transplant from the disk to $\Omega$ via the given map $f$. Then at the end we put this result into a Rayleigh principle and hence estimate the Steklov eigenvalue sums on $\Omega$.

The weight function $p$ on the circle $\Sp^1$ has total mass $L=\int_0^{2\pi} p \, d\theta = L(\Sigma, q)$. We ``uniformize'' the weight function by means of the map 
\[
\Theta(\theta)=\frac{2\pi}{L}\int_0^\theta p(\eta) \, d\eta ,
\]
with the point being that $\Theta(\theta)$ increases continuously from $0$ to $2\pi$ as $\theta$ increases from $0$ to $2\pi$. In other words, $\frac{2\pi}{L} p \, d\theta$ pushes forward under the map $\Theta$ to arclength measure on the circle. Note that $\Theta^\prime = 2\pi p/L$.

Consider a function $u \in C^1(\overline{\D})$ that is not identically zero on the unit circle. Take an arbitrary $t>0, \phi \in [0,2\pi]$, and fix a choice of $\pm$ sign. Let a function $h$ in polar coordinates be given by
\[
h(r,\theta) = u(r^t,\phi \pm \Theta(\theta)) ,
\] 
and define a trial function on $\Omega$ by
\[
v^{t,\phi, \pm} = h \circ f^{-1} .
\]
(Note how $u$ transforms to $h$ by radial stretching, rotation, possibly reflection, and angular uniformization, and then $f^{-1}$ carries the function from the disk to $\Omega$.) 

Obviously $v^{t,\phi, \pm}$ is continuous and bounded on $\overline{\Omega}$, since $h$ is continuous and bounded on $\overline{\D}$ and $f$ is a homeomorphism of $\overline{\D}$ onto $\overline{\Omega}$. We want to show 
\begin{equation} \label{eq:trialfn}
v^{t,\phi, \pm} \in H^1(\Omega)
\end{equation}
so that this function is a valid trial function. It suffices to show $h \in H^1(\D)$, because then \autoref{le:Dirichlet_qc} applies. First notice $h_r \in L^2(\D)$ because boundedness of $u_r$ implies
\[
\int_\D h_r^2 \, dA \leq (\text{const.}) \int_0^1 (tr^{t-1})^2 \, r dr < \infty .
\]
Second, $r^{-1} h_\theta \in L^2(\D)$ because boundedness of $r^{-1} u_\theta$ implies
\begin{align*}
\int_\D r^{-2} h_\theta^2 \, dA 
& \leq (\text{const.}) \int_0^1 (r^{-1+t})^2 \, rdr \int_0^{2\pi} \Theta^\prime(\theta)^2 \, d\theta \\
& \leq (\text{const.}) \int_0^{2\pi} p(\theta)^2 \, d\theta < \infty 
\end{align*}
since $p \in L^2[0,2\pi]$ by hypothesis. This finishes the proof of \eqref{eq:trialfn}. 

Now we may compute
\begin{align}
\Ray[v^{t,\phi, \pm}]
& = \frac{\int_\Omega |\nabla (h \circ f^{-1})|^2 \, dA}{\int_\Sigma (h \circ f^{-1})^2 q \, ds} \notag \\
& = \frac{\int_\D \big\{ a_0(\theta) h_r^2 + a_1(\theta) r^{-2}  h_\theta^2 + a_2(\theta,r) r^{-1} h_\theta h_r \big\} \, r dr d\theta}{\int_{\Sp^1} h^2 p \, d\theta} \label{eq:rayh} 
\end{align}
by \autoref{le:Dirichlet_qc} and recalling that $\mu,a_0,a_1$ depend only on $\theta$, by the hypotheses of \autoref{th:underlying}. Upon substituting the definition of $h$, we find the denominator equals
\begin{align*}
\int_{\Sp^1} h^2 p \, d\theta & = \int_0^{2\pi} u(1,\phi \pm \Theta(\theta))^2 \frac{L}{2\pi} \Theta^\prime(\theta) \, d\theta \\
& = \frac{L}{2\pi} \int_0^{2\pi} u(1,\phi \pm \Theta)^2 \, d\Theta \\
& = \frac{L}{2\pi} \int_0^{2\pi} u(1,\Theta)^2 \, d\Theta .
\end{align*}
Similarly, the numerator equals 
\begin{align*}
& \int_\D \Big( a_0(\theta) u_r(r^t,\phi \pm \Theta(\theta))^2 t^2 r^{2t-2} + a_1(\theta) r^{-2}  u_\theta(r^t,\phi \pm \Theta(\theta))^2 \Theta^\prime(\theta)^2 \\
& \qquad \qquad  \pm a_2(\theta,r) r^{-1} u_\theta(r^t,\phi \pm \Theta(\theta)) \Theta^\prime(\theta) u_r(r^t,\phi \pm \Theta(\theta)) tr^{t-1} \Big) \, r dr d\theta ,
\end{align*}
which simplifies by the change of variable $r \mapsto r^{1/t}$ to give
\begin{align*}
& \int_\D \Big( t a_0(\theta) u_r(r,\phi \pm \Theta(\theta))^2 + \frac{1}{t} a_1(\theta) \Theta^\prime(\theta)^2 r^{-2}  u_\theta(r,\phi \pm \Theta(\theta))^2 \\
& \qquad \qquad  \pm a_2(\theta,r^{1/t}) \Theta^\prime(\theta) r^{-1} u_\theta(r,\phi \pm \Theta(\theta)) u_r(r,\phi \pm \Theta(\theta)) \Big) \, r dr d\theta .
\end{align*}

By averaging this last expression with respect to $\phi \in [0,2\pi]$, we can separate the $\phi$- and $\theta$-integrals and hence obtain from \eqref{eq:rayh} and the definition of $g_0$ and $g_1$ in \eqref{eq:g01is} that
\begin{align*}
\frac{1}{2\pi} \int_0^{2\pi} \Ray[v^{t,\phi, \pm}] \, d\phi & =  \frac{\int_\D \big( t g_0 u_r(r,\phi)^2 + \frac{1}{t} g_1 r^{-2} u_\phi(r,\phi)^2 \big) \, r dr d\phi}{\frac{L}{2\pi} \int_0^{2\pi} u(1,\phi)^2 \, d\phi} 
 \\
& \pm \frac{\frac{1}{2\pi} \int_0^{2\pi}  \int_0^{2\pi} \int_0^1 a_2(\theta,r^{1/t}) \Theta^\prime(\theta) u_r(r,\phi) u_\phi(r,\phi) \, dr d\phi d\theta}{\frac{L}{2\pi} \int_0^{2\pi} u(1,\phi)^2 \, d\phi} .
\end{align*}
The last term cancels if we average over the choice of $\pm$ sign, and so 
\[
\frac{1}{2} \sum_\pm \frac{1}{2\pi} \int_0^{2\pi} \Ray[v^{t,\phi, \pm}] \, d\phi = \frac{2\pi}{L} \frac{\int_\D \big( t g_0 u_r(r,\phi)^2 + \frac{1}{t} g_1 r^{-2} u_\phi(r,\phi)^2 \big) \, r dr d\phi}{\int_0^{2\pi} u(1,\phi)^2 \, d\phi} .
\]
Making the particular choice $t=\sqrt{g_1/g_0}$ gives 
\[
tg_0 = \frac{1}{t}g_1 = \sqrt{g_0 g_1} = g ,
\]
and so the coefficients in the last formula agree and the numerator reduces to $g$ times the Dirichlet integral of $u$. Thus
\begin{equation} \label{eq:averagedRay}
\frac{1}{2} \sum_\pm \frac{1}{2\pi} \int_0^{2\pi} \Ray[v^{t,\phi, \pm}] \, d\phi = \frac{2\pi g}{L} \Ray[u] .
\end{equation}

Now we apply the above formulas to prove the Theorem. Recall that the sum of the first $n$ nonzero Steklov eigenvalues is characterized by a Rayleigh--Poincar\'{e} Variational Principle \cite[p.~98]{B80}:
\begin{align*}
& \sigma_1 + \dots + \sigma_n = \min \big\{ \Ray[v_1] + \dots + \Ray[v_n] :  v_1, \dots,v_n \in H^1(\Omega) \text{\ are pairwise} \\
& \qquad \qquad \qquad \qquad \quad  \text{orthogonal in $L^2(\Sigma,q \, ds)$ and have mean value zero w.r.t.\ $q \, ds$} \big\} .
\end{align*}
Thus in order to get an upper bound on the eigenvalue sum, we need trial functions $v_1,\ldots,v_n$ satisfying the desired orthogonality properties. 

We start by taking eigenfunctions $u_1,u_2,u_3,\ldots$ for the Steklov problem on the unit disk (having constant weight $1$ on the unit circle), with corresponding eigenvalues $\sigma_j(\D), j=1,2,3,\ldots$ as in \eqref{eq:diskeigenfns}. Note that $u_1, \dots,u_n \in C^1(\overline{\D})$ and these functions are pairwise orthogonal in $L^2(\Sp^1)$ and have mean value zero over $\Sp^1$. Then we construct  trial functions $v_1^{t,\phi, \pm},\ldots,v_n^{t,\phi, \pm}$ by following the method in the proof above. These trial functions belong to $H^1(\Omega)$, and are pairwise orthogonal in $L^2(\Sigma, q \, ds)$ because
\begin{align*}
\int_\Sigma v_l^{t,\phi, \pm} \, v_m^{t,\phi, \pm} \, q \, ds 
& = \int_{\Sp^1} h_l h_m p \, d\theta \qquad \text{recalling that $f$ pushes $p \, d\theta$ forward to $q \, ds$} \\
& = \int_0^{2\pi} u_l(1,\phi \pm \Theta(\theta)) u_m(1,\phi \pm \Theta(\theta)) \frac{L}{2\pi} \Theta^\prime(\theta) \, d\theta \\
& = \frac{L}{2\pi} \int_0^{2\pi} u_l(1,\phi \pm \Theta) u_m(1,\phi \pm \Theta) \, d\Theta \qquad \text{by changing variable} \\
& = 0
\end{align*}
if $l \neq m$, by the pairwise orthogonality of $u_1,\ldots,u_n$. A similar calculation confirms that each trial function $v_l^{t,\phi, \pm}$ has mean value zero with respect to $q \, ds$.

Inserting these trial functions into the Rayleigh--Poincar\'{e} Variational Principle implies that
\begin{equation} 
\label{eq:trialprinc}
\sum_{j=1}^n \sigma_j(\Omega,q) \leq \sum_{j=1}^n \Ray[v_j^{t,\phi, \pm}] .
\end{equation}
The left side of this inequality is independent of the angle $\phi \in [0,2\pi]$ and of the choice of $\pm$ sign that we made in constructing the trial functions. Hence we may average over those quantities, obtaining with the help of \eqref{eq:averagedRay} that
\begin{align*}
\sum_{j=1}^n \sigma_j(\Omega,q) 
& \leq \sum_{j=1}^n \frac{1}{2} \sum_\pm \frac{1}{2\pi} \int_0^{2\pi} \Ray[v_j^{t,\phi, \pm}] \, d\phi \\
& =  \sum_{j=1}^n \frac{2\pi g}{L} \sigma_j(\D) .
\end{align*}
Recall that $\sigma_j(\D)=\lceil j/2 \rceil$ by \eqref{eq:diskspectrum}. Thus multiplying the last equation by $L$ proves the Theorem in the special case where $C$ is the identity function. This special case implies the Theorem for arbitrary concave increasing $C$, thanks to Hardy--Littlewood--P\'{o}lya majorization \cite[{\S}3.17]{HLP88}. (For more references on majorization, see \cite[Appendix~A]{LS13}.)

Equality holds in the theorem if $\Omega$ is a disk of radius $R$ with $q \equiv \text{const.}$\ and $f$ maps $\D$ to $\Omega$ by a complex linear map (dilation, rotation and translation), because in that case we compute $g=1$ while the eigenfunction $r^k \cos(k \theta)$ on $\Omega$ has eigenvalue $\sigma=k/Rq$. Multiplying this eigenvalue by the weighted perimeter $L=2\pi R q$ yields $2\pi k$, which is the quantity appearing on the right side of the inequality in the theorem.
\end{proof}

\begin{proof}[Equality statement for first nonzero eigenvalue]
Assume $\sigma_1(\Omega,q) L(\Sigma,q) = 2\pi g$. Since $g \geq 1$ by \autoref{le:ggeq1}, we conclude $g=1$ by Weinstock's inequality \eqref{eq:weinstock}. That is, equality holds in \autoref{le:ggeq1}.

Further, Weinstock's equality statement \cite[(4.6)]{W54} provides a conformal map from the unit disk to $\Omega$ that pushes a constant weight forward to $q$. If in addition $q \equiv1$, then $\Omega$ is a disk by the argument at the end of \autoref{sec:simplyconn}.

Let us now sketch a direct proof for the equality statement in the theorem, a proof that relies on our method rather than Weinstock's (although still using that $g=1$ by his result). Enforcing equality in the proof of \autoref{th:underlying} shows that equality must hold in \eqref{eq:trialprinc} when $n=1$, for each choice of first eigenfunction $u_1$ on the unit disk and each choice of $\phi,\pm$. Fix $\phi=0$ and choose the ``$+$'' sign. Define a homeomorphism $\Phi : \Omega \to \D$ by 
\[
\Phi = \alpha \circ f^{-1}
\]
where $\alpha : \D \to \D$ is the angular uniformization map $\alpha : (r,\theta) \mapsto (r^t,\Theta(\theta))$. The trial function in the proof above is $v^{t,0,+} = u_1 \circ \Phi$. Its Rayleigh quotient equals $\sigma_1(\Omega,q)$, by equality in \eqref{eq:trialprinc}, and so the trial function must be a Steklov eigenfunction corresponding to $\sigma_1$. If we choose $u_1=r \cos \theta = x_1$, then the trial function $u_1 \circ \Phi$ is simply the first component of the map $\Phi$. Choosing $u_1=r \sin \theta = x_2$ gives the second component of $\Phi$. Hence the components of $\Phi$ are Steklov eigenfunctions, and so are harmonic functions. 

We will show $\Phi$ satisfies the Cauchy--Riemann equation. Since both components of $\Phi$ are Steklov eigenfunctions for $\sigma_1$, we have the boundary condition
\[
\frac{\partial \Phi}{\partial n} = \sigma_1 q \Phi \qquad \text{on $\Sigma$.}
\]
Also, our construction guarantees that $\Phi$ pushes the density $q$ on $\Sigma$ forward to the constant density $L/2\pi$ on the unit circle, meaning $q \, ds = (L/2\pi) \, d\theta$. That is, 
\[
\left\lvert \frac{\partial \Phi}{\partial s} \right\rvert = \frac{2\pi}{L} q = \sigma_1 q ,
\]
where we use that $\sigma_1 = 2\pi g/L$ by hypothesis and $g=1$. The tangent vector $\partial \Phi/\partial s$ at location $\Phi$ on the unit circle points in the counterclockwise direction, since $\Phi$ is sense-preserving, and so 
\[
\frac{\partial \Phi}{\partial s} = i \Phi \left\lvert \frac{\partial \Phi}{\partial s} \right\rvert = i \Phi \sigma_1 q = i \frac{\partial \Phi}{\partial n} 
\]
by the Steklov boundary condition above. Consequently $\Phi$ satisfies the Cauchy--Riemann equation $\partial \Phi / \partial x_2 = i \, \partial \Phi/\partial x_1$ on the boundary $\Sigma$. Harmonicity of $\Phi$ and the maximum principle now guarantee the validity of the Cauchy--Riemann equation throughout the domain $\Omega$, as desired. Thus the homeomorphism $\Phi$ is an analytic function and hence a conformal map.

This proof sketch for the equality statement is not quite rigorous, since we have not justified that $\Phi$ possesses a normal derivative at the boundary or that the Steklov boundary condition holds pointwise. One can avoid these technical concerns by working with the weak form of the Steklov eigenfunction equation. 
\end{proof}

\noindent \emph{Remark.} If the complex dilatation $\mu$ depends on both $r$ and $\theta$ then one can still get a result from the proof of \autoref{th:underlying}, by substituting the trigonometric formula $u=r^k \cos k\theta$ or $r^k \sin k\theta$ for the disk eigenfunction. The quantities $g_0$ and $g_1$ then involve integration over the unit disk of $a_0$ and $a_1 p^2$, respectively, multiplied by $2k r^{2k-2} r \, dr d\theta$. Hence the values of $g_0$ and $g_1$ depend on $k$ and thus on the index $j$ of the eigenvalue, which makes the resulting eigenvalue estimates more complicated. For this reason we assume in the theorem that $\mu$ depends only on $\theta$.

\begin{proof}[Proof of \autoref{Corollary:Special}]
By applying \autoref{th:underlying} with the concave increasing function $C(a)=a^s$, where $0 < s \leq 1$, we obtain maximality of $(\sigma_1^s + \cdots + \sigma_n^s)^{1/s} \, L/g$ for the disk with constant weight. Then the limiting case $s \downarrow 0$ suggests we take $C(a)=\log a$, which yields maximality of the disk for the functional
\[
\sum_{j=1}^n \log (\sigma_j L/g) = n \log \Big( \sqrt[n]{\sigma_1 \cdots \sigma_n} \, L/g \Big) .
\]
When $s<0$ we can choose the concave increasing function $C(a)= - a^s$, which leads to minimality of the disk for
$\sum_{j=1}^n (\sigma_j L/g)^s$. Lastly, for $t>0$ we take $C(a)=-e^{-ta}$, which implies minimality of the disk for $\sum_{j=1}^n \exp(-t \sigma_j L/g)$.
\end{proof}

\section{\bf Simply connected domains --- Proof of \autoref{Corollary:SimplyConnectedEstimate}}
\label{sec:simplyconn}

\begin{example}[Simply connected domain $\Omega$]\label{ex:simplyconn} \rm
Let $f : \D \rightarrow \Omega$ be a conformal mapping, where $\Omega$ is bounded with piecewise smooth boundary. Then $\overline{\partial} f \equiv 0$ because $f$ is analytic, and so $\mu \equiv 0$. Thus $\mu$ is obviously independent of $r$. The definitions \eqref{eq:a0a1are} and \eqref{eq:a2is} give
\[
a_0 = 1 , \qquad a_1 = 1 , \qquad a_2 = 0 ,
\]
which is to be expected from conformal invariance of the Dirichlet integral (cf.\ \eqref{eq:transform}). The associated geometric quantities in \eqref{eq:g01is} are then
\[
g_0 = 1 , \qquad
g_1 = \gamma_1(p) \overset{\text{def}}{=} \frac{\frac{1}{2\pi}\int_0^{2\pi} p(\theta)^2 \,d\theta}
  {\big(\frac{1}{2\pi}\int_0^{2\pi} p(\theta) \,d\theta\big)^{\! 2}} \geq 1 .
\]
\qed
\end{example}

This last example and \autoref{th:underlying} together imply the inequality in \autoref{Corollary:SimplyConnectedEstimate}, although with a bigger (\emph{i.e.,} worse) geometric factor than we are aiming for, namely $g=\sqrt{g_0 g_1}=\sqrt{\gamma_1(p)}$. To reduce this $g$ to $\gamma$ we call on part (i) of \autoref{Lemma:CenterofMass} below, which exploits the freedom to precompose our conformal map with a M\"{o}bius automorphism of the disk. To verify the sufficient condition for equality in the corollary, note $\gamma_1(p_c)=1$ and hence $\gamma_*(\Omega,q)=1$ by the definition below, so that $\gamma(\Omega,q)=1$ by \autoref{Lemma:CenterofMass}(i); now use conformal invariance of the Steklov problem to show equality holds in the corollary. Lastly, to prove the equality statement for \autoref{Corollary:SimplyConnectedEstimate}, one argues as follows:  if $\sigma_1(\Omega,q)L(\Sigma,q) = 2\pi \gamma(\Omega,q)$ then Weinstock's inequality \eqref{eq:weinstock} forces $\gamma(\Omega,q)=1$, and so \autoref{Lemma:CenterofMass} part (iii) yields the desired equality statement. Alternatively, we could use the equality statement in \autoref{th:underlying}.

To state the lemma, we minimize $\gamma_1$ over all choices of conformal map: let
\[
\gamma_*(\Omega,q) 
= \inf \big\{ \gamma_1(\widetilde{p}) : \text{$\widetilde{p}$ is a weight arising from $q$ via a conformal map $\widetilde{f} : \D \to \Omega$} \big\} .
\]
Since $\gamma_1 \geq 1$ we know $\gamma_*(\Omega,q) \geq 1$. The next lemma records some useful properties of the conformal geometric factor  $\gamma_\star$.
\begin{lemma} \label{Lemma:CenterofMass}\ 

(i) $\sqrt{\gamma_*(\Omega,q)}=\gamma(\Omega,q)$, where the latter quantity was defined in \eqref{eq:gammais}. 

(ii) The infimum defining $\gamma_*(\Omega,q)$ is attained for precisely one conformal map $\widetilde{f}$ (up to pre-rotations of the disk $\D$), namely the map such that the measure $\widetilde{p}^{\ 2} d\theta$ on the unit circle has center of mass at the origin:
\[
\int_0^{2\pi} e^{i\theta} \; \widetilde{p}(\theta)^2 \, d\theta = 0 .
\]
In particular, if $\Omega$ and $q$ have $k$-fold rotational symmetry for some $k \geq 2$ then the infimum defining $\gamma_*(\Omega,q)$ is attained when $\widetilde{f}(0)=0$.

(iii) If $\gamma_*(\Omega,q) = 1$ then $(\Omega,q)$ is conformally equivalent to $(\D,p_c)$ for some constant weight function $p_c$. In the unweighted case ($q \equiv 1$), if $\gamma_*(\Omega,1) = 1$ then $\Omega$ is a disk. 
\end{lemma}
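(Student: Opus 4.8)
The plan is to use that any two conformal maps $\D\to\Omega$ differ by precomposition with a M\"{o}bius automorphism of $\D$, so that the infimum defining $\gamma_*(\Omega,q)$ over all conformal maps equals the infimum over automorphisms $\sigma$ of $\D$ applied to one fixed map $f$ (with weight $p$). Since the denominator of $\gamma_1$ is the conformally invariant weighted length, $\frac1{2\pi}\int_0^{2\pi}\widetilde p\,d\theta=L/2\pi$, minimizing $\gamma_1(\widetilde p)$ is the same as minimizing $\int_0^{2\pi}\widetilde p^{\,2}\,d\theta$. The first step is to compute how the latter transforms. Writing $\widetilde f=f\circ\sigma$, the induced boundary weight pulls back as $\widetilde p(\theta)=p(\omega(\theta))\,\omega'(\theta)$ with $e^{i\omega(\theta)}=\sigma(e^{i\theta})$; substituting $\omega$ back to $\theta$, using the Poisson-kernel expression for the boundary Jacobian $\omega'=|\sigma'|$, and letting $c\in\D$ denote the automorphism parameter, yields
\[
\int_0^{2\pi}\widetilde p^{\,2}\,d\theta=\frac{(1+|c|^2)M_0-2\Real(\overline{c}\,M_1)}{1-|c|^2},\qquad M_0=\int_0^{2\pi}p^2\,d\theta,\quad M_1=\int_0^{2\pi}p^2 e^{i\theta}\,d\theta .
\]

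Next I would minimize the right-hand side over $c\in\D$. Optimizing $\arg c$ reduces this to minimizing $G(\rho)=\big((1+\rho^2)M_0-2\rho|M_1|\big)/(1-\rho^2)$ over $\rho=|c|\in[0,1)$. Since $G(\rho)\to+\infty$ as $\rho\to1^{-}$, the unique interior critical point is the minimizer, and a short computation gives
\[
\min_{c\in\D}\int_0^{2\pi}\widetilde p^{\,2}\,d\theta=\sqrt{M_0^{\,2}-|M_1|^2}.
\]
Hence $\gamma_*(\Omega,q)=\frac1{2\pi}\sqrt{M_0^{\,2}-|M_1|^2}\big/(L/2\pi)^2$, which by \eqref{eq:gammais} equals $\gamma(\Omega,q)^2$; this proves part (i). As a byproduct the minimum on the left is intrinsic to $(\Omega,q)$, so the expression \eqref{eq:gammais} is independent of the chosen map $f$ (the well-definedness claimed just before the lemma), and $\gamma(\Omega,q)\ge1$ because each $\gamma_1\ge1$ by Cauchy--Schwarz.

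Part (ii) follows by reading the optimality condition variationally. Applying the displayed transformation with $\widetilde p$ in place of $p$ shows that the first-order change of $\int\widetilde p^{\,2}\,d\theta$ under an infinitesimal automorphism in direction $c$ is $-2\Real(\overline c\,\widetilde M_1)$, where $\widetilde M_1=\int_0^{2\pi}\widetilde p^{\,2}e^{i\theta}\,d\theta$; hence $\widetilde p$ is a minimizer exactly when $\widetilde M_1=0$, the stated center-of-mass condition. The minimizing $c$ is unique (the interior critical point of $G$ is unique, and $\arg c$ is determined once $M_1\ne0$), while pre-rotations of $\D$ leave both $\int\widetilde p^{\,2}\,d\theta$ and the vanishing of the first moment unchanged; together these give uniqueness up to pre-rotation. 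For the symmetry remark I would take the initial Riemann map $f$ to conjugate rotation by $2\pi/k$ on $\D$ to the $k$-fold symmetry of $(\Omega,q)$, sending $0$ to the center of symmetry; then $p^2$ is $2\pi/k$-periodic, so $M_1=0$ and $f$ itself already realizes the infimum.

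Finally, part (iii) is the equality case. If $\gamma_*(\Omega,q)=1$ then the optimal weight satisfies $\gamma_1(\widetilde p)=1$, and equality in Cauchy--Schwarz forces $\widetilde p$ to be a constant $p_c$; since $\widetilde p$ is the weight induced by the conformal map $\widetilde f$, conformal invariance of the Steklov Rayleigh quotient shows $(\Omega,q)$ is conformally equivalent to $(\D,p_c)$. When $q\equiv1$ this reads $|\widetilde f'|\equiv p_c$ on $\partial\D$; as $\log|\widetilde f'|$ is harmonic in $\D$ (because $\widetilde f'$ is analytic and nonvanishing) and constant on the boundary, the maximum principle makes it constant, so $\widetilde f'$ is constant, $\widetilde f$ is affine, and $\Omega$ is a disk. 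The step I expect to be the main obstacle is the transformation formula in the first paragraph: correctly tracking the circle reparametrization and the boundary Jacobian $|\sigma'|$ (a Poisson kernel) so as to land on the clean quadratic-in-$c$ expression, and then confirming that the critical point of $G$ is a minimum rather than a maximum. The remaining steps---the explicit minimum value, the variational center-of-mass reformulation, and the two equality cases---are comparatively routine.
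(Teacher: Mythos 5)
Your proposal is correct and follows essentially the same route as the paper: both compute how $\int_0^{2\pi}\widetilde p^{\,2}\,d\theta$ transforms under precomposition with a M\"obius automorphism, minimize the resulting quadratic-in-the-parameter expression over the disk to get the value $\sqrt{M_0^{\,2}-|M_1|^2}$, characterize the minimizer by the vanishing first moment, and settle part (iii) via Cauchy--Schwarz plus the maximum principle for $\log|\widetilde f'|$. The only cosmetic difference is that you identify the center-of-mass condition variationally (first-order criticality plus the explicit formula), whereas the paper computes $\int w\,\widetilde p(w)^2\,|dw|$ directly as a rational function of the M\"obius parameter; also note that when $M_1=0$ the minimizer sits at $\rho=0$ rather than at an interior critical point of $G$, though your minimum-value formula remains valid there.
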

\begin{proof}[Proof of \autoref{Lemma:CenterofMass}]\ 

Part (i). The original conformal map $f : \D \to \Omega$ can be related to any other conformal map $\widetilde{f} : \D \to \Omega$ by
\[
f(z) = \widetilde{f}(e^{i\phi} M(z))
\]
for some $\phi \in \R$ and a M\"{o}bius automorphism of the disk of the form 
\[
M(z) = \frac{z+\zeta}{1+z\overline{\zeta}} , \qquad |z| \leq 1 ,
\]
where $\zeta \in \D$ is some given point. Write $w=e^{i\phi}M(z)$. The measure $\widetilde{p} \, |dw|$ associated with $\widetilde{f}$ is the push forward of $p \, |dz|$ under $z \mapsto w$, and so 
\[
\widetilde{p}(w) = p(z) \left| \frac{dz}{dw} \right|
\]
when $|z|=|w|=1$; note here we identify $p(\theta)$ with $p(z)$ when $z=e^{i\theta}$, and so on. Hence
\[
\int_{\Sp^1} \widetilde{p}(w) \, |dw| = \int_{\Sp^1} p(z) \, |dz| 
\]
and 
\begin{align}
\int_{\Sp^1} \widetilde{p}(w)^2 \, |dw| 
& = \int_{\Sp^1} p(z)^2 \frac{1}{|dw/dz|} \, |dz| \notag \\
& = \int_{\Sp^1} p(z)^2 \frac{|1+\overline{\zeta}z|^2}{1-|\zeta|^2} \, |dz| \notag \\
& = \frac{A + 2 \Real (\overline{\zeta} B) + |\zeta|^2 A}{1-|\zeta|^2} \label{eq:AB}
\end{align}
where
\[
A = \int_{\Sp^1} p(z)^2 \, |dz| , \qquad B = \int_{\Sp^1} p(z)^2 z \, |dz| .
\]
Thus to evaluate the infimum $\gamma_*(\Omega,q)$, we must minimize expression \eqref{eq:AB} with respect to the choice of $\zeta \in \D$. 

Clearly we should choose $\arg \zeta$ in \eqref{eq:AB} such that $\overline{\zeta} B = - |\zeta B|\leq 0$. Then \eqref{eq:AB} can be written as 
\[
h(t) = A \frac{1- 2|c|t + t^2}{1-t^2} = A \Big( \frac{1+|c|}{1+t} + \frac{1-|c|}{1-t} - 1 \Big) ,
\]
where $t = |\zeta| < 1$ and $c=B/A$ so that $|c| \leq 1$. In fact, $|c|<1$ since $|B|<A$ (which holds because the density $p$ cannot concentrate at a single point). Note that $h$ is strictly convex for $0<t<1$, with $h^\prime(0)=-2|c|A \leq 0$ and $h(t) \rightarrow \infty$ as $t \rightarrow 1$. Hence $h$ has a unique minimum point $t_\text{min} \in [0,1)$, which we can determine by setting $h^\prime(t)=0$ and solving to find
\[
t_\text{min} = \frac{|c|}{1+\sqrt{1-|c|^2}} .
\]
Hence the minimizing point $\zeta$ is
\[
\zeta_\text{min} = \frac{-c}{1+\sqrt{1-|c|^2}} 
\]
(noting that $\overline{\zeta}_\text{min} B \leq 0$ as required). The minimum value of the expression \eqref{eq:AB} equals
\[
h(t_{min}) = \sqrt{A^2 - |B|^2} 
\]
and thus 
\[
\gamma_*(\Omega,q) = \min_{\widetilde{p}} \gamma_1(\widetilde{p}) = \frac{\frac{1}{2\pi} \sqrt{A^2 - |B|^2} }{\big( \frac{1}{2\pi} \int_{\Sp^1} p(z) \, |dz| \big)^2} = \gamma(\Omega,q)^2 ,
\]
by recalling the definition \eqref{eq:gammais} of $\gamma(\Omega,q)$.

\medskip
Part (ii). The task in this part of the lemma is to show that $\int_{\Sp^1} w \widetilde{p}(w)^2 \, |dw| = 0$ for precisely one value of $\zeta \in \D$, and that this value is the minimizing value $\zeta_\text{min}$ found in Part (i). Then the optimal conformal map $\widetilde{f}$ is unique, up to pre-rotation by the angle $\phi$. 

So consider an arbitrary $\zeta \in \D$ and compute (as in the proof of Part (i)) that
\begin{align*}
\int_{\Sp^1} w \widetilde{p}(w)^2 \, |dw|
& = \int_{\Sp^1} p(z)^2 \frac{w}{|dw/dz|} \, |dz| \\
& = \int_{\Sp^1} p(z)^2 \frac{(z+\zeta)(1+\zeta \overline{z})}{1-|\zeta|^2} \, |dz| \, e^{i\phi} \\
& = \frac{B+2A\zeta +\overline{B} \zeta^2}{1-|\zeta|^2} \, e^{i\phi} .
\end{align*}
The numerator of this last expression vanishes if and only if $\zeta=\zeta_\text{min}$, as desired. (The quadratic has a second root, when $B \neq 0$, but that root lies outside the unit disk whereas $\zeta$ must lie inside the disk.)

Suppose now that $\Omega$ and $q$ are invariant under rotation of the plane by angle $2\pi/k$, for some $k \geq 2$. If we choose the conformal map $\widetilde{f}$ to map the origin to the origin, then $\widetilde{f}$ commutes with rotation by angle $2\pi/k$ (meaning $\widetilde{f}(w) = e^{-2\pi i/k} \widetilde{f}(e^{2\pi i/k}w)$ for all $w \in \D$). Hence $\widetilde{p}$ is invariant under rotation by angle $2\pi/k$, and so $\int_{\Sp^1} w \widetilde{p}(w)^2 \, |dw| = 0$. Thus the infimum defining $\gamma_*(\Omega,q)$ is attained when $\widetilde{f}(0)=0$.

\medskip
Part (iii). Suppose $\gamma_*(\Omega,q)=1$ and that $\widetilde{p}$ is the weight that achieves the infimum for $\gamma_*$. Then by definition of $\gamma_*$,
\[
\frac{\frac{1}{2\pi}\int_0^{2\pi} \widetilde{p}(\theta)^2 \, d\theta}{\left(\frac{1}{2\pi}\int_0^{2\pi} \widetilde{p}(\theta) \, d\theta \right)^{\! \! 2}} = \gamma_1(\widetilde{p}) = \gamma_*(\Omega,q) = 1 ,
\]
and so from the equality conditions in Cauchy--Schwarz we deduce that $\widetilde{p}$ is constant. Thus $(\Omega,q)$ is conformally equivalent to $(\D,\text{const})$.

Now suppose $q \equiv 1$ and $\gamma_*(\Omega,1) = 1$. Then by the case we just proved, we know $(\Omega,1)$ is conformally equivalent to $(\D,\text{const})$. Let $f : \D \rightarrow \Omega$ be the conformal equivalence. Then since a constant weight on $\Sp^1$ pushes forward to $q \equiv 1$, we find $|f^\prime|$ is constant on the unit circle. Hence $|f^\prime|$ is constant on the unit disk by the maximum principle applied to the harmonic function $\log |f^\prime|$. Therefore $f^\prime$ itself is constant, and so $f$ is linear and $\Omega$ is a disk. 
\end{proof}

\section{\bf Starlike planar domains --- Proof of \autoref{Corollary:MainStarlikePlane}}
\label{sec:starlike}

\begin{example}[Starlike domain $\Omega$, with radius function $R$] \label{ex:starlike} \rm
Let $f : \D \rightarrow \Omega$ be the stretch homeomorphism defined in polar coordinates by $f(re^{i\theta})=R(\theta) re^{i\theta}$. Then 
\begin{align*}
\overline{\partial} f
& = \frac{e^{i\theta}}{2}(f_r + if_\theta/r) = e^{2i\theta}iR^\prime(\theta)/2 , \\
\partial f
& = \frac{e^{-i\theta}}{2}(f_r - if_\theta/r) = (2R(\theta)-iR^\prime(\theta))/2 , 
\end{align*}
and so the complex dilatation is 
\begin{equation} \label{eq:mu-starlike}
\mu = \frac{\overline{\partial} f}{\partial f} = e^{2i\theta} \frac{iR^\prime(\theta)}{2R(\theta)-iR^\prime(\theta)} .
\end{equation}
Notice $\mu$ depends only on $\theta$, and that $\lVert \mu \rVert_{L^\infty(\D)} < 1$ since $R$ is bounded below away from $0$ and $R^\prime$ is bounded above (recalling $R$ is Lipschitz). Thus $f$ is quasiconformal. 

After substituting $\mu$ into the definitions in \eqref{eq:a0a1are} and \eqref{eq:a2is}, we find 
\[
a_0 = 1 + (\log R)^\prime(\theta)^2, \qquad a_1 = 1 , \qquad a_2 = -2 (\log R)^\prime(\theta) .
\]
Alternatively, one can verify directly that the Dirichlet integral transforms according to \eqref{eq:transform} using these formulas for $a_0,a_1,a_2$, by inserting the starlike stretch mapping $f$ into the left side of \eqref{eq:transform} and evaluating in polar coordinates.

To compute $p$ one needs the formula for arclength density along $\Sigma$, which in polar coordinates says:
\[
\frac{ds}{d\theta} = |\partial_\theta f(e^{i\theta})| = \sqrt{R(\theta)^2+R^\prime(\theta)^2} .
\]
Then the geometric quantities $g_0$ and $g_1$ are found to equal the formulas \eqref{eq:g0starlike} and \eqref{eq:g1starlike}, which proves \autoref{le:gstarlike}. Obviously $g_0 \geq 1$ with equality if and only if $R$ is constant, and by Cauchy--Schwarz, $g_1 \geq 1$ with equality if $R$ is constant.

\smallskip
\noindent \emph{Remark on the equality case in \autoref{le:gstarlike} for $g_1(\Omega,1)$.} Consider the unweighted case $q\equiv 1$. If $g_1=1$ then $R^2 + (R^\prime)^2 = \text{const.}$ by definition \eqref{eq:g1starlike} and Cauchy--Schwarz. This condition certainly holds for the constant function $R$, but there are also other solutions. The hippopede with $\delta=0$ (two tangent circles) provides a different solution, although not simply connected; see \autoref{sec:example_hippopede}. Take a union of two such hippopedes, the second one rotated by 90 degrees, to get a flower-shaped starlike domain with $g_1=1$. More generally, take at least three points on the unit disk centered at the origin, in such a way that their convex hull contains the origin. The union of the unit disks centered at the chosen points is a piecewise smooth, starlike domain with $g_1=1$.\qed
\end{example}

The inequality in \autoref{Corollary:MainStarlikePlane} now follows from \autoref{th:underlying} and the above \autoref{ex:starlike}. 

Suppose equality holds in the theorem for $\sigma_1$. Then Weinstock's inequality \eqref{eq:weinstock} forces $g(\Omega,q)=1$. In particular, $g_0(\Omega)=1$ in \eqref{eq:g0starlike}, forcing $R$ to be constant. Therefore $\Omega$ is a centered disk, and so  $g_1(\Omega,q)=1$ in \eqref{eq:g1starlike}; now Cauchy--Schwarz gives that $q$ is constant. Alternatively, to complete the equality statement one may use the equality statement of \autoref{le:ggeq1} provided the Beltrami equation is assumed to hold on the unit circle: if $g=1$ then $e^{-2i\theta}\mu$ is real and so $R^\prime \equiv 0$ by \eqref{eq:mu-starlike}, and also $(q \circ f) |\partial_r f| = (q \circ f) R$ is constant; hence both $R$ and $q$ are constant.

\begin{remark} \rm
The same starlike stretch mapping $f$ as above was used by Kuttler and Sigillito \cite{KSi1} to find \emph{lower} bounds for Steklov eigenvalues. They pulled $\Omega$ back to a disk, as we do, and got a distorted Rayleigh quotient. They used the minimum value of the distortion factor in order to estimate the Rayleigh quotient from below, and hence obtained a lower bound on eigenvalues. In contrast, our method averages the distortion factor and hence obtains upper bounds.
\end{remark}

\medskip
Let us add some remarks about the existence of an optimal choice of origin, for minimizing the quantities $g_0$ and $g_1$ appearing in the starlike result.

The \emph{Lipschitz kernel} of $\Omega$ is the set $\Omega_\text{ker}$ of all points in $\Omega$ with respect to which $\Omega$ is starlike with Lipschitz radius function. Clearly this kernel is an open set (although that would be false if we dropped the Lipschitz assumption, by the example of a slit disk). 
\begin{lemma}[Minimizing the geometric quantities through an optimal choice of origin] \label{Lemma:StarlikeOrigin}
Assume $\Omega$ is starlike with Lipschitz radius function, and that $q \equiv 1$. 

(i) Then the quantities $g_0$ and $g_1$ given in \eqref{eq:g0starlike} and \eqref{eq:g1starlike}, with $q \equiv 1$, are strictly convex when regarded as functions of an origin point in $\Omega_\text{ker}$. 

(ii) If $\Omega$ has $k$-fold rotational symmetry about a point in $\Omega_\text{ker}$, for some $k \geq 2$, then $g_0$ and $g_1$ are minimized when the origin is taken at that center of symmetry.

(iii) If $\Omega$ is convex then a choice of origin exists in $\Omega$ that minimizes $g=\sqrt{g_0 g_1}$.
\end{lemma}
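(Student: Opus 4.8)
The plan is to rewrite both $g_0$ and $g_1$ as averages over the \emph{fixed} boundary curve $\Sigma=\partial\Omega$ of integrands that are convex in the origin $z_0$, and then read off convexity, symmetry and existence from those representations. Parametrize $\Sigma$ by arclength as $P(s)$ with unit tangent $T(s)=P'(s)$, and let $\psi(s)\in(0,\pi)$ be the angle between the radial direction (from $z_0$) and $T$, so that with $R=|P(s)-z_0|$ one has $R'(\theta)/R=\cot\psi$, $ds=\sqrt{R^2+R'^2}\,d\theta$ and $d\theta/ds=\sin\psi/R$. The central quantity is
\[
\ell(s,z_0)=\operatorname{Im}\!\big(\overline{(P(s)-z_0)}\,T(s)\big)=R\sin\psi ,
\]
the (signed, positive) distance from $z_0$ to the tangent line of $\Sigma$ at $P(s)$; crucially it is an \emph{affine} function of $z_0$.

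First I would establish the representations
\[
g_0=\frac{1}{2\pi}\int_\Sigma\frac{ds}{\ell(s,z_0)},
\qquad
g_1=\frac{(2\pi)^2}{L^2}\cdot\frac{1}{2\pi}\int_\Sigma\frac{|P(s)-z_0|^2}{\ell(s,z_0)}\,ds ,
\]
where $L$ is the origin-independent perimeter. For $g_1$ this is a direct change of variables using $R^2+R'^2=R^2/\sin^2\psi$, together with the fact that the denominator $\frac{1}{2\pi}\int_0^{2\pi}\sqrt{R^2+R'^2}\,d\theta=L/2\pi$ does not depend on $z_0$. For $g_0$ the same change of variables turns the integrand into $\cos^2\psi/(R\sin\psi)=1/\ell-\sin\psi/R$; the key point is that $\int_\Sigma(\sin\psi/R)\,ds=\int_0^{2\pi}d\theta=2\pi$ is a constant independent of the origin, so the non-convex piece integrates away and only the convex term $1/\ell$ remains. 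This cancellation is the main obstacle, and I expect the genuinely delicate part to be justifying it (and the arclength substitution) when the radius function is merely Lipschitz, so that $T$, $\psi$ and $ds$ exist only almost everywhere.

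Granting the representations, part (i) is then transparent. The function $1/\ell$ is a convex decreasing function of the positive affine quantity $\ell$, hence convex in $z_0$; and $|w|^2/t$ is the perspective of $|w|^2$, jointly convex on $\R^2\times(0,\infty)$, so $|P(s)-z_0|^2/\ell(s,z_0)$ is convex in $z_0$ as a convex function precomposed with the affine map $z_0\mapsto\big(P(s)-z_0,\ell(s,z_0)\big)$. Integration preserves convexity, and $g_1$ is a positive constant times a convex function. For \emph{strict} convexity I would pass to Hessians: that of the $g_0$-integrand is $2\ell^{-3}(\nabla_{z_0}\ell)(\nabla_{z_0}\ell)^{\top}$ (a rank-one positive semidefinite matrix, since $\ell$ is affine), whose null direction is normal to the tangent line at $P(s)$; a similar computation shows the $g_1$-integrand Hessian annihilates only the radial direction $P(s)-z_0$. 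As $s$ runs once around the closed curve these distinguished directions sweep out all of $\R^2$, so the integrated Hessians have trivial kernel and are positive definite throughout the (convex) kernel $\Omega_{\mathrm{ker}}$.

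Finally, parts (ii) and (iii) follow from (i). For (ii), if $\Omega$, and hence $R$, is invariant under the rotation $\rho$ of order $k\ge2$ about a center $z_*\in\Omega_{\mathrm{ker}}$, then the $\rho$-invariance of the family of tangent lines and the rotation-invariance of distance show $g_0(\rho z_0)=g_0(z_0)$ and likewise for $g_1$. Differentiating at the fixed point $z_*$ gives $\nabla g_0(z_*)\,D\rho=\nabla g_0(z_*)$, and since $D\rho$ is a nontrivial rotation this forces $\nabla g_0(z_*)=0$; by strict convexity on the convex set $\Omega_{\mathrm{ker}}$, the critical point $z_*$ is the unique minimizer, and the same holds for $g_1$. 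For (iii), a convex $\Omega$ has $\Omega_{\mathrm{ker}}=\Omega$, and as $z_0\to\partial\Omega$ the tangent lines at nearby boundary points pass arbitrarily close to $z_0$, so $\ell\to0$ on an arc of positive length and $\int_\Sigma ds/\ell$ diverges; thus $g_0\to\infty$, and since $g_1\ge1$ we get $g=\sqrt{g_0g_1}\ge\sqrt{g_0}\to\infty$. Continuity of $g$ on $\Omega$ together with this coercivity makes the sublevel sets compact, so $g$ attains its minimum at an interior origin.
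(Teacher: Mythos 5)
Your proposal is correct and follows essentially the same route as the paper: both rewrite $g_0$ and $g_1$ as boundary integrals whose integrands depend on the origin only through the affine distance-to-tangent-line $\ell=(x-\omega)\cdot N(x)$, deduce convexity (you via the perspective function $|w|^2/t$, the paper via an explicit Hessian after normalizing $x=0$, $N=(-1,0)$), get strictness from the rank-one degeneracy whose null direction varies around the closed curve, and handle (ii) and (iii) by symmetry/criticality and by blow-up of $g_0$ as the origin approaches the boundary. (One trivial slip: the kernel of $(\nabla_{z_0}\ell)(\nabla_{z_0}\ell)^{\top}=NN^{\top}$ is the tangent direction, not the normal; the conclusion is unaffected since tangent directions also sweep out all of $\R^2$.)
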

\begin{proof}[Proof of \autoref{Lemma:StarlikeOrigin}]\ 

Part (i). First we modify our notation to emphasize the dependence of $g_0$ and $g_1$ on the origin point $\omega \in \Omega_\text{ker}$ with respect to which the radius function $R_w$ is defined:
\begin{align*}
  g_0(\omega) & =1+\frac{1}{2\pi}\int_0^{2\pi}(\log R_\omega)'(\theta)^2\,d\theta, \\
  g_1(\omega) & =\frac{\frac{1}{2\pi}\int_0^{2\pi} \big( R_\omega(\theta)^2+R_w^\prime(\theta)^2 \big) \,d\theta}
  {\big(\frac{1}{2\pi}\int_0^{2\pi}\sqrt{R_\omega(\theta)^2+R_\omega^\prime(\theta)^2} \,d\theta\big)^{\! 2}} .
\end{align*}
Here the domain $\Omega$ is fixed and $q \equiv 1$. 

The denominator of $g_1(\omega)$ equals the boundary length $L(\Sigma)$ divided by $2\pi$, which is obviously independent of the choice of origin $\omega$. Thus the task is to prove strict convexity of the numerator term, which is
\begin{align}
\int_0^{2\pi} \big( R_\omega(\theta)^2+R_w^\prime(\theta)^2 \big) \,d\theta 
& = \int_0^{2\pi} \left( \frac{ds}{d\theta} \right)^{\! \! 2} \,d\theta \notag \\
& \text{where angle $\theta$ is measured around the origin at $\omega$,} \notag \\
& = \int_\Sigma \frac{ds}{d\theta} \,ds \notag \\
& = \int_\Sigma \frac{|x-\omega|^2}{(x-\omega) \cdot N(x)} \, ds(x) , \label{eq:omegaconvex}
\end{align}
where the last formula for $ds/d\theta$ follows from a simple geometric analysis (see for example \cite[proof of Lemma 10.2]{LS13}). 

Since an integral of convex functions is convex, for convexity it suffices to fix $x \in \Sigma$ and prove  convexity of the last integrand as a function of $\omega \in \Omega_\text{ker}$. We might as well assume $x=0$ (by translating the domain) and that $N(x)=(-1,0)$ points in the negative horizontal direction (by rotating the domain). Then the task is to prove convexity of the function
\[
K(\omega) = \frac{|\omega|^2}{\omega_1} = \omega_1 + \frac{\omega_2^2}{\omega_1} ,
\]
where $\omega=(\omega_1,\omega_2)$ and we note that $\omega_1 > 0$ by starlikeness of the domain. The Hessian matrix is
\[
D^2 K = \frac{2}{\omega_1^3} 
\begin{pmatrix}
\omega_2^2 & -\omega_1 \omega_2 \\
-\omega_1 \omega_2 & \omega_1^2
\end{pmatrix} ,
\]
which is nonnegative definite. Hence $K$ is convex. 

We must still justify that the integral \eqref{eq:omegaconvex} is \emph{strictly} convex as a function of $\omega$. The Hessian matrix of $K$ has one zero eigenvalue, whose eigenvector (null direction) is $\omega$ itself. Relaying that information back to formula \eqref{eq:omegaconvex}, we see that the Hessian of the integrand (the second derivative matrix with respect to $\omega$) has null direction $x-\omega$. Thus the second directional derivative of \eqref{eq:omegaconvex} at point $\omega$ in an arbitrary direction $y$ is positive, because $y$ cannot be parallel to $x-\omega$ for all $x \in \Sigma$. 

Turning now to $g_0(\omega)$, we observe that  
\[
2\pi g_0(\omega) = \int_0^{2\pi} \big( R_\omega(\theta)^2+R_w^\prime(\theta)^2 \big)/R_\omega(\theta)^2  \,d\theta = \int_\Sigma \frac{1}{(x-\omega) \cdot N(x)} \, ds ,
\]
and so after the same reductions as above, the question reduces to convexity of $1/\omega_1$ in the right half plane, which is obvious. For the strictness of the convexity one argues in a similar fashion to above (details left to the reader). 

In fact, convexity of $g_0(\omega)$ was proved already by Aissen \cite[Section 5, Theorem 3]{A58}, as a corollary of strict subharmonicity of $g_0$.

\medskip
Part (ii). Notice $g_0$ and $g_1$ must have critical points when the origin $\omega$ sits at the center of symmetry, by the convexity in Part (i). The strictness of the convexity then implies that these critical points are global minima. 

\medskip Part (iii). Suppose $\Omega$ is convex, so that it is starlike with respect to any choice of origin inside the domain. We know $g_1(\omega) \geq 1$, and so to show existence of a minimizing $\omega$, we need only show $g_0(\omega)$ blows up as $\omega$ approaches the boundary curve $\Sigma$. This fact was proved by Aissen \cite[Theorem 2]{A58}, and since his argument is short, we present a version of it here. Let $\omega_0 \in \Sigma$. Then by Fatou's Lemma,
\begin{align*}
\liminf_{\omega \rightarrow \omega_0} 2\pi g_0(\omega) 
& \geq \int_\Sigma \frac{1}{(x-\omega_0) \cdot N(x)} \, ds(x) \\
& \geq \int_\Sigma \frac{1}{|x-\omega_0|} \, ds(x) \\
& \geq \int_\Sigma \frac{1}{|s|} \, ds ,
\end{align*}
where we have chosen to measure arclength $s$ on $\Sigma$ starting from the point $\omega_0$ (at which $s=0$). The last integral diverges, and so $g_0$ blows up as $\omega$ approaches $\Sigma$.
\end{proof}

\section{\bf Remarks on composite transformations}\label{sec:compound}

So far we have described two methods of generating quasiconformal maps for which the complex dilatation $\mu$  is purely angular, namely, conformal maps and starlike maps. Can we profit from composing such maps?

In \autoref{Lemma:CenterofMass} we pre-composed a conformal map with a M\"obius transformation to find the best possible conformal map. Let us indicate one way to extend this optimization procedure to the quasiconformal case. Let $f:\D\to\Omega$ be conformal, $\psi:\D\to\Omega$ be quasiconformal with purely angular $\mu$ (\emph{e.g.}\ a starlike map), and $M$ be a M\"{o}bius automorphism of the disk. Define
\begin{align*}
  \Psi = f\circ M \circ f^{-1} \circ \psi.
\end{align*}
Then one easily checks that $\mu_\Psi=\mu_\psi$. Therefore $a_0(\Psi)=a_0(\psi)$ and $a_1(\Psi)=a_1(\psi)$. With $F=f\circ M\circ f^{-1}$ (automorphism of $\Omega$) we have
\begin{align*}
  p_\Psi(\theta)&=|\partial_\theta \Psi(e^{i\theta})| = |\partial \Psi| |e^{2i\theta}-\mu_\psi|=|F'\circ \psi| p_\psi(\theta) .
\end{align*}
The presence of $M$ in the formula for $F$ allows for an origin-optimization reminiscent of the one from \autoref{Lemma:CenterofMass}. (In fact, if $\psi=f$ then we reduce back to that case.) This approach can be used to optimize the choice of origin for a starlike domain, and while the optimization might be theoretically difficult, it should remain feasible numerically.  

Another use of composite maps would be to pull back a Steklov problem from a non-starlike domain to a starlike one through a conformal transformation, and then estimate the eigenvalues on that starlike domain using \autoref{Corollary:MainStarlikePlane}. This two-step procedure might yield a better estimate than a direct application of our conformal result \autoref{Corollary:SimplyConnectedEstimate}.

\section{\bf Examples}\label{examples}
\label{sec:Examples}

How sharp are our theorems when compared with the summed Hersch--Payne--Schiffer bound \eqref{HPS}? Or compared with the actual Steklov eigenvalues? To gain intuition on these questions, we will investigate families of regular polygons, ellipses and hippopedes, applying both our conformal mapping and starlike approaches. 

Take the weight to be constant, $q \equiv 1$, throughout this section. 

We determined (in \autoref{sec:comparison}) conditions on $g$ and $\gamma$ under which our estimates on Steklov eigenvalue sums are stronger than the summed HPS bound $\sum_{j=1}^n \sigma_j L \leq \pi n(n+1)$. For the sake of simplicity, we will concentrate on the sum of the first two eigenvalues, $n=2$. In that case our bounds are better than the summed HPS inequality whenever 
\[
g \leq \frac{3}{2} \qquad \text{or} \qquad \gamma \leq \frac{3}{2} .
\]
For larger $n$, the quantities $g$ and $\gamma$ are allowed to be even larger. 

We also want to compare our bounds with the numerically computed values of $(\sigma_1 +\sigma_2)L$ and longer eigenvalue sums, computed using the Finite Element Method with piecewise linear or quadratic conforming elements. Nonconvex domains (e.g.\ hippopedes in \autoref{sec:example_hippopede}) are rather challenging due to re-entrant corners in the  polygonal approximating domain. Further, the boundary approximation introduces errors even for convex domains. To get more accurate results we used an adaptive mesh refinement method (see Garau Morin \cite{GM09}), based on residual errors, and also a boundary snapping mechanism. We chose FEniCS \cite{LMW} to implement the numerical scheme. The scheme is based on similar ones used for mixed Steklov eigenvalue problems by Kuznetsov \textit{et.al.} \cite{Ketal} and Kulczycki--Kwa\'snicki--Siudeja \cite{KKS}.

For the numerical comparisons, we define the ratio
\begin{align*}
\rho_n &= \rho_n(\Omega) = \frac{\sum_{j=1}^n \sigma_j (\Omega) L(\Sigma)}{\sum_{j=1}^n \sigma_j(\D) \cdot 2\pi },\\
\rho_{max} &= \max_n \rho_n .
\end{align*}
In particular, 
\[
\rho_2 = \frac{(\sigma_1 + \sigma_2)L}{4\pi} .
\]
Notice $\rho_2(\D)=1$ (in fact, $\rho_n(\D)=1$). By our estimate \eqref{eq:HPScomp} for the conformal method, and its analogue for the starlike method, we have 
\[
\rho_2 \leq \rho_{max} \leq \gamma \qquad \text{and} \qquad \rho_2  \leq \rho_{max}\leq g .
\]
If $\rho_2$ is close to $\gamma$ or $g$, for a specific domain, then we conclude that our theorems provide a good estimate on the sum of the first two Steklov eigenvalues, and similarly for $\rho_{max}$ with the sum of arbitrary length. It seems that $\rho_{max}=\rho_2$ in many cases. But somewhat surprisingly, it seems the maximal value $\rho_{max}$ may occur for an arbitrarily large value of $n$.

\subsection{Regular polygons}
\label{sec:example_regular}
For the regular $N$-gon centered at the origin, we collect values of $\rho_{max}, g$ and $\gamma$ in \autoref{tab:regular}. We also indicate which $n$-values give $\rho_{max}$. (These results will be explained below.)

The starlike approach performs better in each case (since $g < \gamma$), and both the starlike and conformal mapping approaches improve on the summed HPS bounds for all $n \geq 2$ (since $g,\gamma < 3/2$) except that the conformal method gives no result for equilaterals or squares ($N=3,4$). 
%
%
Lastly, we see $\gamma$ and $g$ are nearly $1$ for $N \geq 8$, which is to be expected since the $N$-gon is almost circular.

Now we explain how to compute $g$ and $\gamma$. 

\begin{table}[t]
  \centering
  \begin{tabular}{clllllll}
    \toprule
  $N$ & 3 & 4 & 5 & 6 & 8 & 10\\
  \midrule
  $\rho_{max}$ & 1{\tiny\,$n\!=\!\infty$} & 1.0013{\tiny\,$n\!=\!9$}& 1.0097{\tiny$n\!=\!7$}& 1.0061{\tiny$n\!=\!9$} & 1.0016{\tiny$n\!=\!{13}$}&1.0012{\tiny$n\!=\!{15}$} \\
  $g$ & 1.4142   & 1.1547   & 1.0844 & 1.0541 & 1.0282 & 1.0174\\
  $\gamma$ & $\infty$ & $\infty$ & 1.3096 & 1.1374 & 1.0527 & 1.0281 \\
  \bottomrule
  \end{tabular}
 \vspace{6pt} 
 \caption{Regular polygon centered at the origin with $N$ sides: values of the ratio $\rho_{max}$ and constants $g$ and $\gamma$. The starlike method gives better results than summed HPS bounds, since $g<3/2$ in each case. The conformal method also gives reasonable bounds for $5$ sides and higher, since $\gamma<3/2$ in those cases. On equilateral triangles $\rho_{max}$ seems to equal $1$, attained in the limit as $n \to \infty$ (see the open problem in \autoref{sec:problems}). }
  \label{tab:regular}
\end{table}

\subsubsection*{Starlike method}
Due to symmetry of the regular polygons, we only need to define the radius function on $(0,\pi/N)$ and multiply all integrals by $2N$. The regular polygon with inscribed circle of radius $1$ is given by
\begin{align*}
  R(\theta)=\sec\theta .
\end{align*}
Hence
\begin{align*}
  (\log R)^\prime & =\tan \theta , \\
  R^2+(R^\prime)^2 & =\sec^4 \theta .
\end{align*}
Therefore \eqref{eq:g0starlike} and \eqref{eq:g1starlike} for regular polygons give
\begin{align*}
  g_0&=1+\frac{2N}{2\pi}\int_0^{\pi/N} \tan^2 \theta \, d\theta = \frac{N}{\pi}\tan(\pi/N)=\frac{L^2}{4\pi A} , \\
  g_1&=\frac{\frac{2N}{2\pi}\int_0^{\pi/N}\sec^4 \theta \,d\theta}
  {\left(\frac{2N}{2\pi}\int_0^{\pi/N}\sec^2 \theta \,d\theta\right)^{\! 2}}=\frac{\pi}{N} \Big( \frac{1}{3} \tan \big( \frac{\pi}{N} \big)+\cot \big( \frac{\pi}{N} \big) \Big) .
\end{align*}
Note that $g_0$ equals the isoperimetric ratio for the domain. This fact was observed already by Aisssen \cite[Section 3]{A58}, for any polygon with an inscribed circle.

The equations above yield 
\begin{equation} \label{eq:gasymptotic1}
g=\sqrt{g_0 g_1}=\sqrt{1+\frac{1}{3}\tan^2 \frac{\pi}{N}} = 1+\frac{\pi^2}{6N^2}+\frac{7\pi^4}{72N^4}+O \left( \frac{1}{N^6} \right)
\end{equation}
from which the values in \autoref{tab:regular} are computed. The formula confirms our expectation that $g$ should approach $1$ as the number of sides increases to infinity. 

\subsubsection*{Conformal method}
The Schwarz-Christoffel map provides a conformal map $f$ of the unit disk to a regular $N$-gon, with the origin mapping to the center. The map is defined through its derivative
\begin{align*}
  f^\prime(z)=\frac{1}{\sqrt[N]{\left(\frac{1-z^N}{2}\right)^2}}.
\end{align*}
Hence 
\begin{align*}
  p(\theta)&=\frac{1}{\sqrt[N]{\left|\frac{1-e^{iN\theta}}{2}\right|^2}}=\frac{1}{\sqrt[N]{\sin^2(N\theta/2)}} .
\end{align*}
Due to rotational symmetry of the regular polygon, $\int_0^{2\pi} e^{i\theta} p(\theta)^2\,d\theta=0$. Therefore $\gamma(\Omega,1)^2=\gamma_1(p)$ (see \autoref{Lemma:CenterofMass}). 

Symmetry again enables us to reduce integrals to the range $\theta\in (0,\pi/N)$, and the substitution $t=\sin^2(N\theta/2)$ then shows that
\begin{align*}
  \frac{1}{2\pi}\int_{0}^{2\pi} p(\theta)\,d\theta
& =\frac{1}{\pi}B\left(\frac{1}{2}-\frac{1}{N},\frac{1}{2} \right) ,
\end{align*}
where $B(a,b)$ is the beta function. Similarly
\begin{align*}
  \frac{1}{2\pi}\int_{0}^{2\pi} p(\theta)^2\,d\theta=\frac{1}{\pi }
B\left(\frac{1}{2}-\frac{2}{N},\frac{1}{2} \right).
\end{align*}
This last integral diverges for $N=3,4$, and hence the conformal method fails to give a finite bound for equilateral triangles and squares. In fact, the Schwarz--Christoffel map for any domain with an interior angle of $\pi/2$ or smaller will give a weight $p$ that does not belong to $L^2$, and so $\gamma_1$ is infinite in such cases.

Rewriting the beta function using gamma functions, one can show from \eqref{eq:gammais} that
\begin{align*}
  \gamma(\Omega,1) = \sqrt{\gamma_1(p)} 
%
%
%
%
= \frac{\Gamma(1-4/N)^{1/2} \Gamma(1-1/N)^2}{\Gamma(1-2/N)^2}.
\end{align*}
The values of $\gamma$ in \autoref{tab:regular} follow directly. Further, with the help of the series expansion of $\Gamma(1+z)$ we obtain the expansion
\begin{equation} \label{eq:gasymptotic2}
  \gamma(\Omega,1) = 1+\frac{\pi^2}{6N^2} +\frac{6\zeta(3)}{N^3}+\frac{103\pi^4}{360N^4}+O \big( \frac{1}{N^6} \big),
\end{equation}
where $\zeta$ is the Riemann zeta function. Comparing \eqref{eq:gasymptotic1} and \eqref{eq:gasymptotic2}, we see the starlike and conformal methods agree up to the second order. The starlike method is better due to the absence of the cubic term.

\subsection{Ellipses} 
\label{sec:example_ellipse}

\begin{table}[t]
  \centering
  \begin{tabular}{cllllll}
    \toprule
    $\varepsilon^2$ & 0 & 1/4 & 1/2 & 3/4 \tiny{(2:1 ellipse)} & 8/9 \tiny{(3:1 ellipse)} & 99/100 \tiny{(10:1 ellipse)}\\
  \midrule
  $\rho_{max}$ & 1 & 1.0058{\tiny$n\!=\!2$} & 1.0340{\tiny$n\!=\!2$} & 1.1311{\tiny$n\!=\!2$} & 1.0896{\tiny$n\!=\!6$} & 1.1566{\tiny$n\!=\!{14}$} \\
  $g$     & 1 & 1.0065 & 1.0382 & 1.1607 & 1.4448 & 3.9995 \\
  \bottomrule
  \end{tabular}
 \vspace{6pt} 
 \caption{Ellipse centered at the origin with eccentricity $\e$, giving values of the ratio $\rho_{max}$ and constant $g$. The starlike method gives better results than summed HPS bounds on most ellipses, since $g<3/2$ when $\e^2 \leq 8/9$. When the eccentricity is large the starlike method is worse for each $n$ than summed HPS, since $g>2$ when $\e^2 \gtrsim 0.95$.
}
  \label{tab:ellipse}
\end{table}

Ellipses are another natural family of examples. We will apply the starlike method but not the conformal method, since the conformal map from a disk to the interior of an ellipse is rather complicated (involving incomplete elliptic integrals). 


\subsubsection*{Starlike method}
Consider an ellipse centered at the origin with longer semiaxis of length $1$ along the horizontal axis and with eccentricity $\varepsilon$. The perimeter can be expressed using the complete elliptic integral of the second kind, giving $L=4E(\varepsilon)$. 
The radius function of the ellipse is 
\begin{equation} \label{eq:ellipseradius}
  R(\theta)=\frac{\sqrt{1-\varepsilon^2}}{\sqrt{1-\varepsilon^2\cos^2(\theta)}}
\end{equation}
and hence one can compute
\begin{align}
g_0 & = 1+ \frac{1}{2\pi} \int_{0}^{2\pi} (\log R)^\prime(\theta)^2 \, d\theta = \frac{1-\varepsilon^2/2}{\sqrt{1-\varepsilon^2}}, \label{eq:ellipseg0} \\
g_1 & = \frac{\frac{1}{2\pi} \int_0^{2\pi} \big( R^2+(R^\prime)^2 \big) \, d\theta}{(L/2\pi)^2} = \frac{1-\varepsilon^2+\varepsilon^4/8}{\sqrt{1-\varepsilon^2}} \frac{\pi^2}{4E(\e)^2} . \notag
\end{align}
Hence
\[
g=\sqrt{g_0 g_1}=1+\frac{5}{64}\varepsilon^4+ \frac{5}{64} \varepsilon^6+O(\varepsilon^8).
\]
See \autoref{tab:ellipse} for values of $g$ and $\rho_{max}$ for a few values of the eccentricity. For moderate eccentricity we get quite accurate results (meaning $g$ is close to $\rho_{max}$, which equals $\rho_2$). From the table one can also compare our results to the summed HPS bounds, finding that except for highly eccentric ellipses, our bounds are better.

\subsection{Hippopedes}
\label{sec:example_hippopede}

Now we invert ellipses with respect to the unit circle centered at the origin, obtaining the family of curves called \emph{hippopedes}. The family includes stadium-like sets and two slightly overlapping ``almost-circles''. See \autoref{fig:hippo}. Note these curves are $2$-fold symmetric, and so the optimal origin for $g$ is at the center of the domain by \autoref{Lemma:StarlikeOrigin}(ii). \autoref{tab:hippo} summarizes our findings, based on formulas for $g$ and $\gamma$ developed below.

\def\X[#1]{sqrt(#1*cos(x)^2+sin(x)^2)}
\begin{figure}[t]
  \begin{center}
\begin{tikzpicture}
  \begin{polaraxis}[domain=0:360,samples=100,smooth,
    axis line style = {transparent},
    ymax=1, 
    ytick={0.5},
    xticklabels={},
    yticklabels={},
    legend pos=outer north east,
    cycle multi list = {red,blue,green!50!black,black\nextlist solid,densely dashed\nextlist thick}
    ]
    \addplot {\X[0.01]};
    \addlegendentry{$\delta^2=1/100$}
    \addplot {\X[0.0625]};
    \addlegendentry{$\delta^2=1/16$}
    \addplot {\X[0.111111]};
    \addlegendentry{$\delta^2=1/9$}
    \addplot {\X[0.25]};
    \addlegendentry{$\delta^2=1/4$}
    \addplot {\X[0.5]};
    \addlegendentry{$\delta^2=1/2$}
    \addplot {\X[0.75]};
    \addlegendentry{$\delta^2=3/4$}
    \addplot {\X[1]};
    \addlegendentry{$\delta^2=1$ (disk)}
  \end{polaraxis}
\end{tikzpicture}

  \caption{Hippopedes for various choices of $\delta$. For small $\delta$ the curve looks like two circles, while for $\delta=1$ it is a single circle. The hippopede is convex when $\delta^2 \geq 1/2$.   
}
  \label{fig:hippo}
  \end{center}
\end{figure}
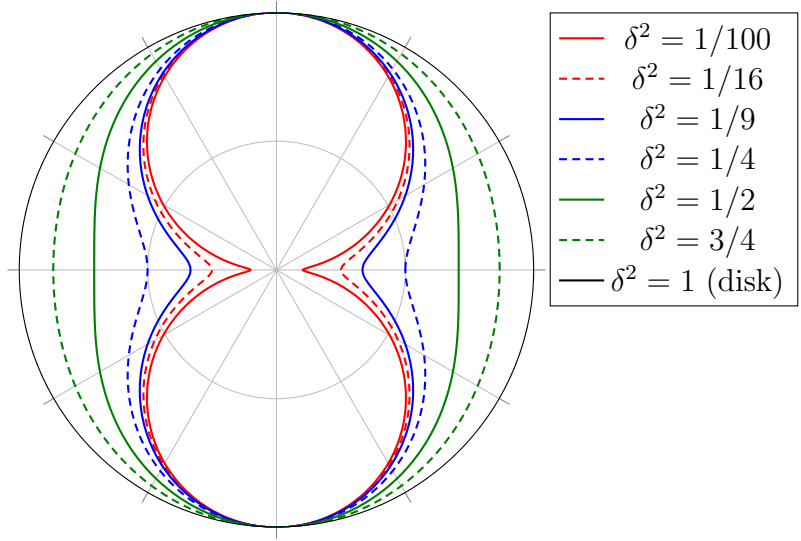

\begin{table}[t]
  \centering
  \begin{tabular}{cccccccc}
    \toprule
  $\delta^2$ & 1/100 & 1/16 & 1/9 & 1/4 & 1/2 & 3/4 & 1 \\
  \midrule
  $\rho_{max}=\rho_2$ & 1.1176 & 1.1016 & 1.0924 & 1.0692 & 1.0281 & 1.0056 & 1 \\  
  $g$      & 2.2751 & 1.4909 & 1.3214 & 1.1378 & 1.0366 & 1.0064 & 1 \\
  $\gamma$ & 2.3733 & 1.6078 & 1.4302 & 1.2112 & 1.0627 & 1.0115 & 1 \\
  \bottomrule
  \end{tabular}
\vspace{6pt}
\caption{Hippopedes centered at the origin, giving values of the ratio $\rho_2$ and the constants $g$ and $\gamma$. The starlike and conformal methods both give better results than summed HPS for hippopedes with $\delta^2 \geq 1/9$, since $g,\gamma<3/2$ in those cases. Both methods are worse than HPS for all $n$ when $\delta$ is small, since then $g,\gamma>2$. See \autoref{sec:example_hippopede}.} 
  \label{tab:hippo}
\end{table}

\subsubsection*{Starlike method}
Let
\[
\delta = \sqrt{1-\e^2} 
\]
where $\e$ is the eccentricity of the ellipse. The hippopede has radius function
\begin{align*}
  R(\theta)=\sqrt{1-(1-\delta^2)\cos^2 \theta} =\sqrt{\sin^2\theta+\delta^2\cos^2\theta},
\end{align*}
as one sees by taking the reciprocal in \eqref{eq:ellipseradius} and then multiplying by $\delta$ (which is a harmless rescaling). Hence
\[
R(\theta)^2+R^\prime(\theta)^2 = \frac{\sin^2\theta+\delta^4\cos^2\theta}{\sin^2 \theta + \delta^2 \cos^2 \theta} \leq 1 .
\]
Note that $R^2+(R^\prime)^2=1$ for $\delta=0$ (two touching disks) and also for $\delta=1$ (one larger disk). Hence $L=2\pi$ in these extreme cases, while in general $L\le 2\pi$.

The first geometric quantity can be evaluated as
\[
  g_0 =\frac{1}{2\pi}\int_0^{2\pi} \frac{R(\theta)^2+R^\prime(\theta)^2}{R(\theta)^2} \, d\theta =\frac{1+\delta^2}{2\delta} ,
\]
which equals the value found for the ellipse in \eqref{eq:ellipseg0}, of course, since $(\log 1/R)^\prime = -(\log R)^\prime$ and the negative sign disappears after squaring. For the second geometric quantity we have
\[
g_1= \frac{\frac{1}{2\pi}\int_0^{2\pi} \big( R(\theta)^2+R^\prime(\theta)^2 \big) \, d\theta}{(L/2\pi)^2} = \frac{1-\delta+\delta^2}{(L/2\pi)^2} .
\]
Note that $g_1=1$ for two touching disks ($\delta=0$) and for a single disk ($\delta=1$). 

\subsubsection*{Conformal method}
The inversion of the hippopede in the unit circle is an ellipse centered at the origin with semiaxes $a=1/\delta$ (evaluate at $\theta=0$) and $b=1$ (evaluate at $\theta=\pi/2$). 
%
%
The Zhukovsky mapping takes the unit disk to the exterior of an ellipse, and the reciprocal of that mapping provides a conformal map onto the hippopede: 
\begin{align*}
  f(z) = \frac{1}{\frac{a-b}{2}z+\frac{a+b}{2}\frac{1}{z}}=\frac{2 \delta z}{{1+\delta+(1-\delta)} z^2} .
\end{align*}
Take the derivative and square to find
\begin{align*}
  p(\theta)^2=|f^\prime(e^{i\theta})|^2=\delta^2\frac{\delta^2 \cos^2\theta+\sin^2\theta}{(\cos^2 \theta + \delta^2 \sin^2 \theta)^2} .
\end{align*}
Since $\int_0^{2\pi} p(\theta)^2 e^{i\theta} \, d\theta = 0$, we have
\begin{align*}
\gamma(\Omega,1)^2 = \gamma_1(p) = \frac{\frac{1}{2\pi}\int_0^{2\pi} p(\theta)^2 d\theta}{(L/2\pi)^2} =\frac{1+\delta^4}{2\delta} \left(\frac{2\pi}{L}\right)^{\! 2} .
\end{align*}
Hence
\begin{align*}
  \gamma(\Omega,1) &=\sqrt{\frac{1+\delta^4}{2\delta}} \, \frac{2\pi}{L} \\
  g&=\sqrt{g_0 g_1}=\sqrt{\frac{1+\delta^2}{2\delta}(1-\delta+\delta^2)} \, \frac{2\pi}{L} \\
\end{align*}
It is easy to check that $g<\gamma$ for all $\delta$.

Note that $g$ and $\gamma$ both blow up as the hippopede approaches two touching disks ($\delta \to 0$), and hence the summed HPS bound is certainly better than ours for small $\delta$. This fact should not be surprising, since the HPS result for the second eigenvalue is optimal for the double-disk.

\subsection{Other computable examples}
\label{sec:example_others}

The alert reader will notice that the starlike method outperforms the conformal one in all three examples so far, namely regular polygons, ellipses, and hippopedes. On the other hand, the conformal method should be preferred over the starlike method in two circumstances:

(i) when the domain is not starlike with respect to any choice of origin (for then the starlike method does not apply) \emph{e.g.}\ the exponential map $f(z)=e^{\pi z}$ takes the unit disk to a domain that wraps around the origin and touches the real axis at $-1$;  
  
(ii) when the domain is starlike but does not possess an explicit radius function $R(\theta)$ (for then the starlike method will be difficult to apply in practice) \emph{e.g.}\ the conformal map $f(z)=z+cz^{N+1}/(N+1)$ takes the disk to an ``$N$-fold lima\c{c}on'' when $0<c<1$, and this domain does not possess an explicit polar representation when $N \geq 3$, so far as we are aware. 

\section*{Acknowledgments}
This work was partially supported by grants from the FRQNT New Researchers Start-up Program (to Alexandre Girouard), Simons Foundation (\#204296 to Richard Laugesen), National Science Foundation grant DMS-0803120, University of Illinois Research Board, and Polish National Science Centre (2012/07/B/ST1/03356 to Bart\-{\l}omiej Siudeja). 

We are grateful to the following research centers for supporting our participation in workshops at which this paper was developed: MFO-Oberwolfach ``Geometric Aspects of Spectral Theory'' (July 2012); de Giorgi Center at the Scuola Normale in Pisa ``New Trends in Shape Optimization'' (July 2012);  Universit\'{e} de Neuch\^{a}tel ``Workshop on Spectral Theory and Geometry'' (June 2013); Banff International Research Station ``Spectral Theory of Laplace and Schr\"{o}dinger Operators" (July 2013).

\newcommand{\doi}[1]{%
 \href{http://dx.doi.org/#1}{doi:#1}}
\newcommand{\arxiv}[1]{%
 \href{http://front.math.ucdavis.edu/#1}{ArXiv:#1}}
\newcommand{\mref}[1]{%
\href{http://www.ams.org/mathscinet-getitem?mr=#1}{#1}}

\end{document}